\pgfplotsset{compat=1.10}
\newtheorem{theorem}{Theorem}[section]
\newtheorem{corollary}[theorem]{Corollary}
\newtheorem{lemma}[theorem]{Lemma}
\newtheorem{proposition}[theorem]{Proposition}
\newcommand{\R}{\mathbb{R}}
\newcommand{\rd}{\mathrm{d}}
\definecolor{cadmiumgreen}{rgb}{0.0, 0.42, 0.24}
\numberwithin{equation}{section}
\numberwithin{figure}{section}
\begin{document}

\title[ ]{Convergence of energy minimizers of a MEMS model\\ in the reinforced limit}

\author{Philippe Lauren\c{c}ot}
\address{Institut de Math\'ematiques de Toulouse, UMR~5219, Universit\'e de Toulouse, CNRS \\ F--31062 Toulouse Cedex 9, France}
\email{laurenco@math.univ-toulouse.fr}
\author{Katerina Nik}
\address{Faculty of Mathematics\\ University of Vienna \\ Oskar-Morgenstern-Platz 1 \\ A--1090 Vienna\\ Austria}
\email{katerina.nik@univie.ac.at}
\author{Christoph Walker}
\address{Leibniz Universit\"at Hannover\\ Institut f\" ur Angewandte Mathematik \\ Welfengarten 1 \\ D--30167 Hannover\\ Germany}
\email{walker@ifam.uni-hannover.de}
%
\thanks{Partially supported by the CNRS Projet International de Coop\'eration Scientifique PICS07710}
\date{\today}
\keywords{MEMS, transmission problem, $\Gamma$-convergence, minimizers}
%
%
%
\begin{abstract}
Energy minimizers to a MEMS model with an insulating layer are shown to converge in its reinforced limit to the minimizer of the limiting model as the thickness of the layer tends to zero. The proof relies on the identification of the $\Gamma$-limit of the energy in this limit.
\end{abstract}
%
\maketitle
%
\section{Introduction}

A microelectromechanical system (MEMS), such as an electrostatic actuator,  consists of an elastic plate, which is coated with a thin dielectric layer, clamped on its boundary, and suspended above a rigid ground plate. The latter is also coated with a dielectric layer but with positive thickness $\delta>0$, see Figures~\ref{F1} and~\ref{F2}. Applying a voltage difference between the two plates generates a Coulomb force accross the device and induces a deformation of the elastic plate, thereby changing the geometry of the device and converting electrostatic energy to mechanical energy through a balance between electrostatic and mechanical forces \cite{AmEtal, BG01, FMCCS05, PeB03}. Assuming that the physical state of the MEMS device is fully described by the vertical deflection $u$ of the elastic plate and the electrostatic potential $\psi$ inside the device, a mathematical model is derived in \cite{LW19}. It characterizes equilibrium configurations of the device as critical points of the total energy which is the sum of the mechanical and electrostatic energies, with an additional constraint stemming from the property that the elastic plate cannot penetrate the layer covering the ground plate. Specifically, ignoring variations in the transverse horizontal direction, we consider a two-dimensional MEMS in which the rigid ground plate and the undeflected elastic plate have the same one-dimensional shape $D:=(-L,L)$ with $L>0$. The ground plate is located at height $z=-H-\delta$,  where $H>0$, and is coated with a dielectric layer 
\begin{equation*}
\mathcal{R}_\delta  :=  D\times (-H-\delta,-H)
\end{equation*}
 of positive thickness  $\delta$. The vertical deflection $u$ of the elastic plate from its rest position at $z=0$ is a function from $D$ to $[-H, \infty)$ with $u(\pm L)=0$, so that the elastic plate is described by the graph $\{ (x,u(x))\ :\ x\in D\}$ of the function $u$. Observe that the required lower bound $u\ge -H$ on $u$ is due to the assumption that the elastic plate cannot penetrate the dielectric layer $\mathcal{R}_\delta$, while the boundary conditions $u(\pm L)=0$  reflect the fact that the elastic plate is clamped on its boundary. We then define
$$
 \Omega(u):=\left\{(x,z)\in D\times \mathbb{R}\,:\, -H<  z <  u(x)\right\}
$$
as the free space between the elastic plate and the top of the dielectric layer and denote the interface separating the free space and the dielectric layer by
$$
\Sigma (u):=\{(x,-H)\,:\, x\in D,\, u(x)>-H\}\,.
$$
As for the electrostatic potential $\psi$, it is defined in the full device
\begin{equation*}
	\Omega_\delta({u}):=\left\{(x,z)\in D\times \mathbb{R} \,:\, -H-\delta<  z <  u(x)\right\}=\mathcal{R}_\delta \cup  \Omega( {u})\cup  \Sigma(u)\,.
\end{equation*}
%
\begin{figure}
	\begin{tikzpicture}[scale=0.9]
		\draw[black, line width = 1.5pt, dashed] (-7,0)--(7,0);
		\draw[black, line width = 2pt] (-7,0)--(-7,-5);
		\draw[black, line width = 2pt] (7,-5)--(7,0);
		\draw[black, line width = 2pt] (-7,-5)--(7,-5);
		\draw[black, line width = 2pt] (-7,-4)--(7,-4);
		\draw[black, line width = 2pt, fill=gray, pattern = north east lines, fill opacity = 0.5] (-7,-4)--(-7,-5)--(7,-5)--(7,-4);
		\draw[cadmiumgreen, line width = 2pt] plot[domain=-7:7] (\x,{-1-cos((pi*\x/7) r)});
		\draw[cadmiumgreen, line width = 1pt, arrows=->] (3,0)--(3,-1.15);
		\node at (3.3,-0.6) {${\color{cadmiumgreen} u}$};
		\node[draw,rectangle,white,fill=white, rounded corners=5pt] at (2,-4.5) {$\Omega_1$};
		\node at (2,-4.5) {$\mathcal{R}_\delta $};
		\node at (-2,-3) {${\color{cadmiumgreen} \Omega(u)}$};
		\node at (0,-5.75) {$D$};
		\draw[black, line width = 1pt] (-7,-5.25)--(7,-5.25);
		\draw[black, line width = 1pt] (-7,-5.1)--(-7,-5.4);
		\node at (-7,-5.7) {$-L$};
		\draw[black, line width = 1pt] (7,-5.1)--(7,-5.4);
		\node at (7,-5.7) {$-L$};
		\node at (-7.8,1) {$z$};
		\draw[black, line width = 1pt, arrows = ->] (-7.5,-6)--(-7.5,1);
		\node at (-8.4,-5) {$-H-\delta$};
		\draw[black, line width = 1pt] (-7.6,-5)--(-7.4,-5);
		\node at (-8,-4) {$-H$};
		\draw[black, line width = 1pt] (-7.6,-4)--(-7.4,-4);
		\node at (-7.8,0) {$0$};
		\draw[black, line width = 1pt] (-7.6,0)--(-7.4,0);
	\end{tikzpicture}
	\caption{Geometry of $\Omega_\delta(u)$ for a state $u$ with empty coincidence set $\mathcal{C}(u)$.}\label{F1}
\end{figure}
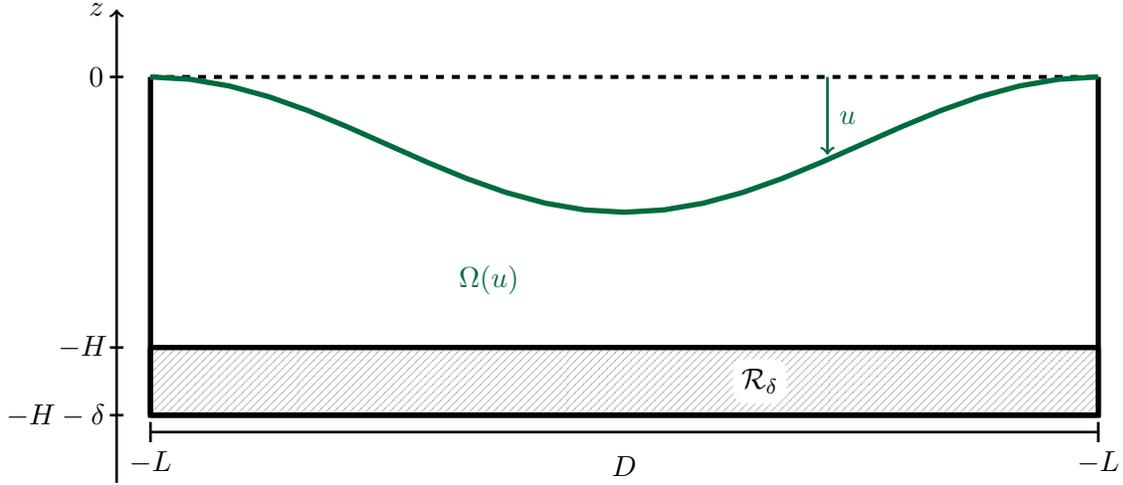

\begin{figure}
	\begin{tikzpicture}[scale=0.9]
	\draw[black, line width = 1.5pt, dashed] (-7,0)--(7,0);
	\draw[black, line width = 2pt] (-7,0)--(-7,-5);
	\draw[black, line width = 2pt] (7,-5)--(7,0);
	\draw[black, line width = 2pt] (-7,-5)--(7,-5);
	\draw[black, line width = 2pt] (-7,-4)--(7,-4);
	\draw[black, line width = 2pt, fill=gray, pattern = north east lines, fill opacity = 0.5] (-7,-4)--(-7,-5)--(7,-5)--(7,-4);
	\draw[blue, line width = 2pt] plot[domain=-7:-3] (\x,{-2-2*cos((pi*(\x+3)/4) r)});
	\draw[blue, line width = 2pt] (-3,-4)--(1,-4);
	\draw[blue, line width = 2pt] plot[domain=1:7] (\x,{-2-2*cos((pi*(\x-1)/6) r)});
	\draw[blue, line width = 1pt, arrows=->] (-5,0)--(-5,-1.8);
	\node at (-4.7,-1) {${\color{blue} u}$};
	\node[draw,rectangle,white,fill=white, rounded corners=5pt] at (2,-4.5) {$\Omega_1$};
	\node at (2,-4.5) {$\mathcal{R}_\delta $};
	\node at (6,-2) {$\color{blue} \Omega(u)$};
	\node at (-6,-2) {$\color{blue} \Omega(u)$};
	\node at (0,-5.75) {$D$};
	\draw[black, line width = 1pt] (-7,-5.25)--(7,-5.25);
	\draw[black, line width = 1pt] (-7,-5.1)--(-7,-5.4);
	\node at (-7,-5.7) {$-L$};
	\draw[black, line width = 1pt] (7,-5.1)--(7,-5.4);
	\node at (7,-5.7) {$-L$};
	\node at (7.9,-3) {${\color{blue} \Sigma(u)}$};
	\draw (7.4,-3) edge[->,bend right, blue, line width = 1pt] (5.2,-3.9);
	\node at (-7.8,1) {$z$};
	\draw[black, line width = 1pt, arrows = ->] (-7.5,-6)--(-7.5,1);
	\node at (-8.4,-5) {$-H-\delta$};
	\draw[black, line width = 1pt] (-7.6,-5)--(-7.4,-5);
	\node at (-8,-4) {$-H$};
	\draw[black, line width = 1pt] (-7.6,-4)--(-7.4,-4);
	\node at (-7.8,0) {$0$};
	\draw[black, line width = 1pt] (-7.6,0)--(-7.4,0);
	\node at (1,-3) {${\color{blue} \mathcal{C}(u)}$};
	\draw (0.45,-3) edge[->,bend right,blue, line width = 1pt] (-0.5,-3.95);
\end{tikzpicture}
	\caption{Geometry of $\Omega_\delta(u)$ for a state $u$ with non-empty coincidence set $\mathcal{C}(u)$.}\label{F2}
\end{figure}
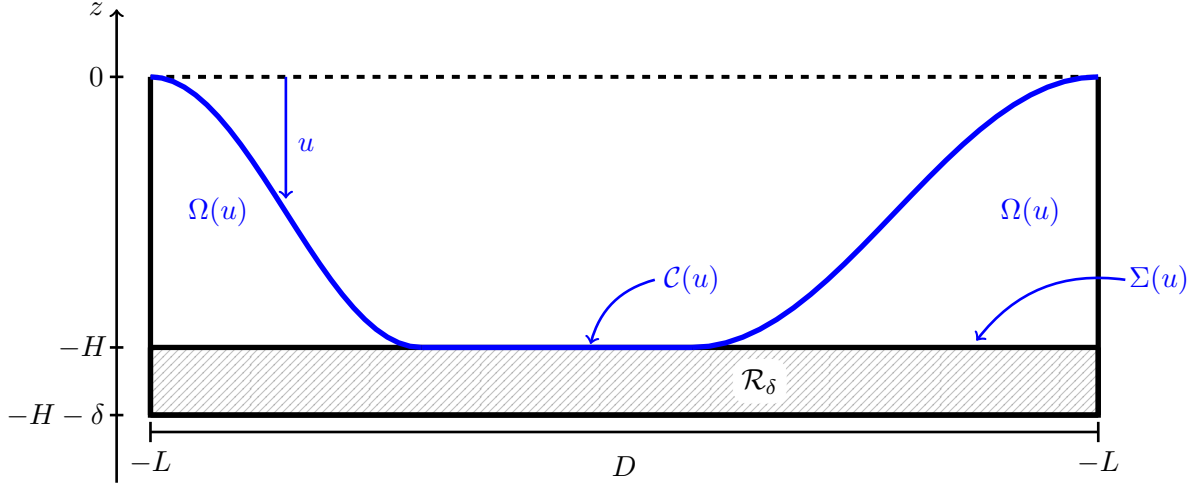
%
 It is worth mentioning at this point that the geometry of the full device $\Omega_\delta(u)$ has different properties according to the minimal value of $u$. Indeed, the free space $\Omega(u)$ is connected and $\Sigma(u)=D\times\{-H\}$ when $\min_{D} u>-H$, while it is disconnected when  $\min_{D} u = -H$, which corresponds to a touchdown of the elastic plate on the dielectric layer $\mathcal{R}_\delta$ on the \textit{coincidence set}
\begin{equation}
	\mathcal{C}(u):=\{x\in D\, :\, u(x)=-H\}\,, \label{cs}
\end{equation}
see Figures~\ref{F1} and~\ref{F2}. In the model derived in \cite{LW19}, equilibrium configurations of the above described MEMS device are critical points of the \textit{total energy} given by
\begin{equation}
E_\delta(u):= E_m(u)+E_{e,\delta}(u)\,. \label{te}
\end{equation}
In \eqref{te}, $E_m(u)$ is the \textit{mechanical energy} 
$$
E_m(u):=\frac{\beta}{2}\|\partial_x^2u\|_{L_2(D)}^2 +\left(\frac{\tau}{2}+\frac{a}{4}\|\partial_x u\|_{L_2(D)}^2\right)\|\partial_x u\|_{L_2(D)}^2
$$
with $\beta>0$, $a\ge 0$,  and $\tau\ge 0$, and includes bending and external stretching effects of the elastic plate. The \textit{electrostatic energy} is
$$
E_{e,\delta}(u):=-\frac{1}{2}\int_{\Omega_\delta(u)}  \sigma_\delta \vert\nabla \psi_{u,\delta}\vert^2\,\rd (x,z)\,,
$$
with $\sigma_\delta$ denoting the permittivity of the device (see \eqref{sigma} below), and $ \psi=\psi_{u,\delta}$ is the electrostatic potential satisfying the transmission problem
\begin{subequations}\label{TMP}
\begin{align}\label{e1}
\mathrm{div}(\sigma_\delta\nabla\psi_{u,\delta})&=0 \quad\text{in }\ \Omega_\delta(u)\,, \\
\llbracket \psi_{u,\delta} \rrbracket = \llbracket \sigma_\delta \partial_z \psi_{u,\delta}\rrbracket &=0\quad\text{on }\  \Sigma(u)\,,\label{e2} \\
\psi_{u,\delta} &=h_{u,\delta}\quad\text{on }\ \partial\Omega_\delta(u)\,.\label{e3}
\end{align}
\end{subequations}
Here, $\llbracket\cdot \rrbracket$  denotes the jump of a function across the interface $\Sigma(u)$. The boundary values of the electrostatic potential are prescribed by a function  $h_{u,\delta}$ which satisfies the assumptions listed below in \eqref{bobbybrown}. A specific example, when $\sigma$ does not depend on the vertical coordinate $z$, is 
	\begin{equation}
		h_{u,\delta}(x,z) = \left\{
		\begin{split}
			 & \frac{1+\sigma(x)(H+z)}{1+\sigma(x)(H+u(x))}\,, \qquad (x,z)\in \bar{D}\times [-H,\infty)\,, \\  
			& \frac{1}{\delta}  \frac{z+H+\delta}{1+\sigma(x)(H+u(x))}\,, \qquad (x,z)\in \bar{D}\times [-H-\delta,-H]\,.
		\end{split}
	\right. \label{zz1}
	\end{equation}

Since the elastic plate is clamped at the boundary and cannot penetrate the dielectric layer $\mathcal{R}_\delta$, the set of admissible deflections is
\begin{equation*}
\bar{S}_0 := \left\{ u\in H_D^2(D)\,:\,  u\ge -H \text{ in } D \right\}\,,
\end{equation*}
where
\begin{equation*}
H_D^2(D) := \left\{ u\in H^2(D)\,:\, u(\pm L) = \partial_x u(\pm L)=0 \right\}.
\end{equation*}
Equilibrium configurations of the MEMS device are then critical points $u\in\bar{S}_0$ of the total energy $E_\delta$. Their analysis involves the associated transmission problem \eqref{TMP} solved by the electrostatic potential $\psi_{u,\delta}$. A natural question is what happens when the thickness $\delta$ of the dielectric layer tends to zero,  in particular, whether the reduced model derived in this limit retains the dielectric inhomogeneity of the device. When the dielectric permittivity $\sigma_\delta$ of the device does not depend on $\delta$, the influence of the dielectric layer is lost in the limit $\delta\to 0$, and the reduced model is obtained simply by setting $\delta=0$ in \eqref{te} and \eqref{TMP}, discarding the jump condition \eqref{e2} which is then meaningless. Building upon the outcome of  \cite{AB86, BCF80}, it turns out that it is rather the \textit{reinforced limit}, where the dielectric permittivity scales as $\delta$ in the layer $\mathcal{R}_\delta$, which leads to a relevant reduced model. For a given deflection $u\in \bar{S}_0$, the reinforced limit of the transmission problem \eqref{TMP} is identified in \cite{AMOP20} by a $\Gamma$-convergence approach. More precisely, it is shown in \cite{AMOP20} that the reinforced limit as $\delta\rightarrow 0$ of \eqref{TMP} is
\begin{subequations}\label{MBP0}
	\begin{align}
		\mathrm{div} (\sigma \nabla\psi_{u}) &=0 \quad\text{in }\ \Omega(u)\,, \label{MBP1}\\
		\psi_{u} &=h_u\quad\text{on }\ \partial\Omega(u)\setminus  \Sigma(u)\,,\label{MBP2}\\
		- \partial_z\psi_{u} +\sigma (\psi_{u}-\mathfrak{h}_u)&=0\quad\text{on }\    \Sigma(u)\label{MBP3}\,;
	\end{align}
\end{subequations}
that is, in the reinforced limit the electrostatic potential $\psi_{u}$ solves Laplace's equation in $\Omega(u)$ with a Robin boundary condition along the interface $\Sigma(u)$ and a Dirichlet  condition on the other boundary parts. Here, $\sigma := \sigma_\delta \mathbf{1}_{\Omega(u)}$ is assumed to be independent of $\delta$. The total energy is then given by
\begin{equation}
E(u):= E_m(u)+E_{e,0}(u)\,, \label{rte}
\end{equation}
where
$$
E_{e,0}(u):=-\frac{1}{2}\int_{\Omega(u)} \sigma \vert\nabla \psi_{u}\vert^2\,\rd (x,z)-\frac{1}{2}\int_D\sigma(x,-H)\big\vert\psi_{u}(x,-H)-\mathfrak{h}_u(x)\big\vert^2\,\rd x
$$
and $\mathfrak{h}_u$ is defined below in \eqref{bobbybrown}.

The purpose of this research is to complete the outcome of \cite{AMOP20} by identifying the reinforced limit of the full model and showing that, in this limit, if $u_\delta^*\in \bar{S}_0$ is a minimizer of $E_\delta$ in $\bar{S}_0$ for each $\delta\in (0,1)$, then the cluster points of $(u_\delta^*)_{\delta\in (0,1)}$ in $L_2(D)$ are minimizers of the reduced total energy $E$ in $\bar{S}_0$. The main tool we shall employ in the forthcoming analysis is the theory of $\Gamma$-convergence. We shall actually show that, under suitable assumptions on the dielectric permittivity $\sigma_\delta$ and the boundary  values in \eqref{TMP}, the $\Gamma$-limit in $L_2(D)$ of $(E_\delta)_{\delta\in (0,1)}$ is the reduced total energy $E$ defined in \eqref{rte}.

Let us finally remark that, in this paper, we focus on the energy approach to take into account the influence of the thickness of a dielectric layer as first developed in \cite{LW18} for a related model. We refer to \cite{BG01,LLG14,LLG15, Pel01a} for alternative approaches to model dielectric layers, all designed within the so-called small aspect ratio approximation. Recall that, in the latter, the electrostatic potential is given explicitly as a function of the deflection $u$ and the model then reduces to a single equation for $u$. Such models have been extensively studied in the last decades in the mathematical literature since the pioneering works of \cite{BGP00, FMPS06, GPW05, Pel01a}, see the book \cite{EGG10}, the survey \cite{LW17b} and the references therein.

\section{Convergence of minimizers}\label{sec.mr}

As already mentioned, the reinforcement limit requires that the permittivity $\sigma_\delta$ in the dielectric layer $\mathcal{R}_\delta$ scales with the layer's thickness; that is, the (scaled) permittivity of the device is given in the form
\begin{subequations}\label{sigma}
	\begin{equation}\label{sigmad}
		\sigma_\delta(x,z):=  \left\{ \begin{array}{ll}
			\delta\sigma (x,z)\,, & (x,z)\in \mathcal{R}_\delta \,, \\
			1\,, & (x,z)\in D\times (-H,\infty)\,,
		\end{array} \right.
	\end{equation}
	for $\delta\in (0,1)$, where $\sigma \in C^2(\bar D\times [-H-1,-H])$ is a fixed function with 
	\begin{equation}\label{sigmam}
		\sigma (x,z) >0\,,\quad (x,z)\in \bar D\times [-H-1,-H]\,.
	\end{equation}
\end{subequations}

With this specific form of $\sigma$, we can show that cluster points as $\delta\to 0$ of minimizers of the total energy $E_\delta$ on $\bar{S}_0$ are minimizers of the reduced total energy $E$. More precisely:

\begin{theorem}\label{TT1}
	Suppose that the dielectric permittivity satisfies \eqref{sigma} and that the assumptions on the boundary values in \eqref{e3} are given by \eqref{bobbybrown} below. For $\delta\in (0,1)$ let $u_\delta^*\in \bar{S}_0$ be any minimizer of $E_\delta$ on $\bar{S}_0$ with corresponding electrostatic potential $\psi_{u_\delta^*,\delta}$ satisfying \eqref{TMP}. Then 
	$$
	\sup_{\delta\in (0,1)}\|u_\delta^*\|_{H^2(D)}<\infty \qquad \text{ and }\qquad \sup_{\delta\in (0,1)}\|\psi_{u_\delta^*,\delta}\|_{H^1(\Omega(u_\delta^*))}<\infty\,,
	$$ 
and there are a subsequence $\delta_j\rightarrow 0$ and a minimizer $u^*\in \bar{S}_0$ of $E$ on $\bar{S}_0$ such that 
	$$
	\lim_{j\to\infty} \big\| u_{\delta_j}^* - u^* \big\|_{H^2(D)} = 0
	$$ 
	and 
	$$
	\lim_{j\to\infty} E_{\delta_j}(u_{\delta_j}^*) = E(u^*)\,.
	$$
	Moreover, for $M>0$ such that $-H\le u_\delta^*\le M-H$ a.e., we have
	$$
	\psi_{u_{\delta_j}^*,\delta_j}-h_{u_{\delta_j}^*}\rightharpoonup \psi_{u^*}-h_{u^*}\quad\text{ in }\ H^1(D\times (-H,M))\,,
	$$
	where $\psi_{u^*}$ satisfies~\eqref{MBP0} (with $u$ replaced by $u^*$).
\end{theorem}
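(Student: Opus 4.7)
The plan is to deduce Theorem~\ref{TT1} from a $\Gamma$-convergence argument after establishing uniform a priori bounds on the minimizers. The scheme is standard: uniform bounds furnish a convergent subsequence, while the liminf and recovery-sequence properties of $\Gamma$-convergence combined with minimality force the limit to be a minimizer of the $\Gamma$-limit. The key external ingredient, to be proved elsewhere in the paper and building on \cite{AMOP20}, is that $E_\delta\to E$ in the sense of $\Gamma$-convergence in $L_2(D)$.

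First I would derive the uniform a priori bounds. Testing the minimality inequality against the reference deflection $\bar u\equiv 0\in\bar S_0$ yields $E_\delta(u_\delta^*)\le E_\delta(0)$, and the scaling \eqref{sigmad}, where $\sigma_\delta=1$ above $-H$ and $\sigma_\delta=\delta\sigma$ on the layer $\mathcal{R}_\delta$ of thickness $\delta$, gives $\sup_{\delta\in(0,1)}|E_\delta(0)|<\infty$ by direct computation. A standard energy estimate for the transmission problem~\eqref{TMP}, obtained by testing \eqref{e1} against $\psi_{u_\delta^*,\delta}$ minus a suitable lifting of its boundary data, bounds $|E_{e,\delta}(u_\delta^*)|$ from above by quantities depending only on lower-order Sobolev norms of $u_\delta^*$. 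Coercivity of $E_m$ on $H_D^2(D)$, via the dominant term $\tfrac{\beta}{2}\|\partial_x^2 u\|_{L_2(D)}^2$ together with the clamped boundary conditions, then yields the uniform bound $\sup_\delta\|u_\delta^*\|_{H^2(D)}<\infty$; revisiting the same energy identity produces the uniform $H^1$-bound on $\psi_{u_\delta^*,\delta}$.

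Next, the $H^2$-bound furnishes a subsequence $u_{\delta_j}^*\rightharpoonup u^*$ in $H^2(D)$ with uniform convergence on $\bar D$, and the obstacle condition $u^*\ge -H$ passes to the pointwise limit, so $u^*\in\bar S_0$. Granted the $\Gamma$-convergence, for any $v\in\bar S_0$ with recovery sequence $(v_\delta)$ the minimality of $u_{\delta_j}^*$ gives
\[
E(u^*)\le\liminf_{j\to\infty}E_{\delta_j}(u_{\delta_j}^*)\le\limsup_{j\to\infty}E_{\delta_j}(u_{\delta_j}^*)\le\limsup_{j\to\infty}E_{\delta_j}(v_{\delta_j})\le E(v),
\]
so $u^*$ minimizes $E$ on $\bar S_0$ and, choosing $v=u^*$, $E_{\delta_j}(u_{\delta_j}^*)\to E(u^*)$. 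The liminf inequality in fact splits into a mechanical part (weak $H^2$-lower semicontinuity of the convex functional $E_m$) and an electrostatic part (forming the bulk of the $\Gamma$-convergence proof), and combining these with the total-energy convergence forces $E_m(u_{\delta_j}^*)\to E_m(u^*)$ separately. Because $\|\partial_x^2 u\|_{L_2(D)}^2$ is the dominant quadratic term of $E_m$, this upgrades weak to strong convergence in $H^2(D)$. For the potentials, the uniform $H^1$-bound gives weak $H^1(D\times(-H,M))$-compactness of the natural extension of $\psi_{u_{\delta_j}^*,\delta_j}-h_{u_{\delta_j}^*}$ across the interface $\Sigma(u_{\delta_j}^*)$, and the limit is identified with $\psi_{u^*}-h_{u^*}$ via the variational characterization of the Robin problem~\eqref{MBP0}.

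The main obstacle is the $\Gamma$-convergence itself. Two issues make its proof subtle: first, the domain $\Omega_\delta(u_\delta^*)$ depends on $\delta$ and may become disconnected when the coincidence set $\mathcal{C}(u^*)$ is nontrivial, so the variational formulations live on different function spaces; second, the thin-layer contribution, carrying the scaling $\sigma_\delta\sim\delta$, must be converted into the Robin boundary term on $\Sigma(u)$ present in $E_{e,0}$. Constructing recovery sequences that respect the obstacle $u\ge -H$ across the moving interface, and upgrading the fixed-deflection $\Gamma$-convergence of \cite{AMOP20} to a form uniform in $u$, is the heart of the argument and where I expect most of the technical work to lie.
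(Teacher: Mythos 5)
Your overall scheme (uniform bounds, subsequence extraction, the liminf/recovery-sequence sandwich combined with minimality, recovery of strong $H^2$-convergence from $E_m(u_{\delta_j}^*)\to E_m(u^*)$, and weak $H^1$-identification of the potentials) is exactly the route the paper takes, with the electrostatic convergence (Proposition~\ref{PP1}) and the $\Gamma$-convergence of $E_\delta$ (Corollary~\ref{PA}) as the external inputs you describe. But there is a genuine gap at the very first step, the uniform bound $\sup_{\delta}\|u_\delta^*\|_{H^2(D)}<\infty$. You assert that testing with $0$ and the ``dominant term'' $\tfrac{\beta}{2}\|\partial_x^2 u\|_{L_2(D)}^2$ give coercivity, because the electrostatic energy is controlled by ``lower-order'' norms. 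The available estimate, namely \eqref{H1} and \eqref{pl}, gives $-E_{e,\delta}(u)\le c_0\big(1+\|u\|_{L_2(D)}^2+\|\partial_x u\|_{L_2(D)}^2\big)$, and by Poincar\'e this right-hand side is of the \emph{same quadratic order} in $\|\partial_x^2 u\|_{L_2(D)}$ as the bending term, with a constant $c_0$ that has no reason to be smaller than $\beta/2$. So the inequality $E_\delta(u_\delta^*)\le E_\delta(0)\le 0$ alone does not close: coercivity of $E_m$ by itself is not enough, and the theorem allows $a\ge 0$, $\tau\ge 0$.

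The paper closes this in two different ways, and your proposal supplies neither. When $a>0$, the quartic stretching term $\tfrac{a}{4}\|\partial_x u\|_{L_2(D)}^4$ absorbs the quadratic electrostatic lower bound via Young's inequality (estimate \eqref{vs}); you never invoke this term. When $a=0$, no energy argument suffices, and the paper needs a genuinely non-energetic ingredient, Lemma~\ref{LLzz}: a uniform $L_\infty$-bound on minimizers obtained from the Euler--Lagrange variational inequality $\beta\partial_x^4 u-\tau\partial_x^2 u+\partial\mathbb{I}_{\bar{S}_0}(u)\ni -g_\delta(u)$, the sign $\mathfrak{g}_\delta(u)\ge 0$ together with the lower bound $g_\delta(u)\ge -K^2$ coming from hypothesis \eqref{Kbound}, and a comparison argument as in \cite{JEPE20}. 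Only with such an a priori $L_2$-bound in hand does Lemma~\ref{LL1} (whose hypothesis \eqref{A1} explicitly includes $\|u\|_{L_2(D)}\le\kappa$, which is what reduces the electrostatic term to a \emph{linear} expression in $\|\partial_x^2 u\|_{L_2(D)}$ via \eqref{e21}) yield the uniform $H^2$- and $H^1$-bounds claimed in the theorem. A smaller remark: the recovery-sequence difficulty you anticipate at the level of $E_\delta$ is not where the work lies; the paper simply takes the constant sequence $u_\delta=u$ there (using \cite{AMOP20}), and the moving-domain/thin-layer construction is carried out instead for the auxiliary electrostatic functionals $G_\delta$ inside Proposition~\ref{PP1}.
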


As we shall see below, the main step in the proof of Theorem~\ref{TT1} is the $\Gamma$-convergence of the sequence $(E_\delta)_{\delta\in (0,1)}$ in $L_2(D)$ to $E$ which is established in Section~\ref{S3}.  We then combine this property with estimates on the minimizers of $E_\delta$ on $\bar{S}_0$, which do not depend on $\delta\in (0,1)$ and are derived in Sections~\ref{S3.3}-\ref{S3.4} to complete the proof. 

Let us finally point out that the assumptions \eqref{sigma} and \eqref{bobbybrown} on the permittivity $\sigma_\delta $ and the boundary conditions $h_{u,\delta}$ guarantee that, for each $\delta\in (0,1)$, the total energy $E_\delta$ defined in \eqref{te} has at least one minimizer $u_\delta^*\in \bar{S}_0$; that is, 
\begin{equation}\label{B}
	E_\delta(u_\delta^*)=\min_{\bar{S}_0}E_\delta\,,
\end{equation}
see \cite[Theorem~1.3]{JEPE20}. Actually, the corresponding electrostatic potential  $\psi_{u_\delta^*,\delta} \in H^1(\Omega_\delta(u_\delta))$ is a strong solution to the  transmission problem~\eqref{TMP} in the sense that $\psi_{u_\delta^*,\delta}\vert_{\mathcal{R}_\delta} \in H^2(\mathcal{R}_\delta)$ and $\psi_{u_\delta^*,\delta}\vert_{\Omega(u_\delta^*)} \in H^2(\Omega(u_\delta^*))$, this regularity property being in fact true for any $u\in \bar{S}_0$ \cite{LW19, JEPE20}. 

As for the reduced total energy $E$, the existence of minimizers of $E$ on $\bar{S}_0$ has already been established in \cite[Theorem~2.3]{LNW20} by a direct approach, assuming additionally that
\begin{equation}
		\partial_z h(x,-H,w) = \sigma(x) \big[ h(x,-H,w) - \mathfrak{h}(x,w) \big]\,, \qquad (x,w)\in D\times [-H,\infty)\,, \label{bryanadams}
\end{equation}
besides \eqref{bobbybrown} below. Theorem~\ref{TT1} then extends the existence of minimizers of $E$ on $\bar{S}_0$ to the situation where \eqref{bryanadams} does not hold. However, it does not provide the $H^2$-regularity of the associated electrostatic potential $\psi_{u^*}$ solving \eqref{MBP0}, which is shown to be true in \cite[Theorem~2.2]{LNW20} under the assumptions \eqref{bryanadams} and \eqref{bobbybrown}.

\section{Assumptions and auxiliary results}\label{S2}

This section is devoted to a precise definition of the boundary conditions \eqref{e3} and~\eqref{MBP3}, and includes as well useful properties of $h_{u,\delta}$ on which we rely on in the sequel.

\subsection{ Boundary data}

We fix two $C^2$-functions
\begin{subequations}\label{bobbybrown}
\begin{equation}\label{bobbybrown2a}
h_b: \bar{D}\times [-H-1,-H]\times [-H,\infty)\rightarrow \R
\end{equation}
and 
\begin{equation}\label{bobbybrown2aa}
h: \bar{D}\times [-H,\infty)\times [-H,\infty)\rightarrow \R
\end{equation}
satisfying
\begin{align}
h_b(x,-H,w)&=h(x,-H,w)\,,\quad (x,w)\in \bar D\times [-H,\infty)\,,\label{bobbybrown2}\\
  \sigma(x,-H)\partial_z h_b(x,-H,w)& = \partial_z h(x,-H,w)\,,\quad (x,w)\in \bar D\times [-H,\infty)\,.\label{bobbybrown3}
\end{align}
We then define for $(x,w) \in \bar D \times [-H,\infty)$
\begin{equation}\label{bobbybrown40}
h_\delta(x,z,w):=  \left\{ \begin{array}{ll}
\displaystyle{h_b\left(x,-H+\frac{z+H}{\delta},w\right)}\,, & z \in [-H-\delta,-H)\,, \\ \vspace{-3mm}
\\
h(x,z,w)\,, &  z\in  [-H,\infty)\,,
\end{array} \right.
\end{equation}
 and observe that $h_\delta\in C(\bar D \times [-H-\delta,\infty)\times [-H,\infty))$ by \eqref{bobbybrown2}.

 In order to guarantee the coercivity of the energy functional $E_{\delta}$ we require that there is a constant $m>0$ such that
\begin{equation}
\vert \partial_x h_b(x,z,w)\vert +\vert\partial_z h_b(x,z,w)\vert  \le \sqrt{m(1+ w^2)}\,, \quad \vert\partial_w h_b(x,z,w)\vert \le \sqrt{m}\,, \label{bobbybrown5}
\end{equation}
for $(x,z,w)\in \bar D \times [-H-1,-H] \times [-H,\infty)$ and
\begin{equation} 
\vert \partial_x h(x,z,w)\vert +\vert\partial_z h(x,z,w)\vert \le \sqrt{\frac{m (1+ w^2)}{H+w}}\,,\quad \vert\partial_w h(x,z,w)\vert \le \sqrt{\frac{m}{H+w}}\,,\label{bobbybrown6}
\end{equation}
for $(x,z,w)\in \bar D \times [-H,\infty) \times [-H,\infty)$. Moreover, we assume that
\begin{equation}\label{Kbound0}
\partial_w h_b(x,-H-1,w) =0\,,\quad (x,w)\in \bar{D}\times [-H,\infty)\,,
\end{equation}
and that there is $K>0$ such that
\begin{equation}\label{Kbound}
\vert \partial_x h(x,w,w)\vert + \vert \partial_z h(x,w,w)+\partial_w h(x,w,w)\vert \le K\,,\quad (x,w)\in \bar D\times [-H,\infty)\,.
\end{equation}
Given a function $u:\bar D\rightarrow [-H,\infty)$ we shall also use the abbreviations
\begin{equation}\label{h000}
h_{u,\delta}(x,z):= h_\delta(x,z,u(x))\,,\quad (x,z)\in \Omega_\delta(u)\,,
\end{equation}
and 
\begin{equation}\label{h00}
h_{u}(x,z):=h(x,z,u(x))\,,\quad (x,z)\in \Omega(u)\,.
\end{equation}
Furthermore, we set
\begin{equation}\label{h0}
\mathfrak{h}_u(x):=\mathfrak{h}_u(x,-H):=h_b(x,-H-1,u(x))\,,\quad x\in \bar D\,.
\end{equation}
\end{subequations}
Note that \eqref{bobbybrown2}-\eqref{bobbybrown3} imply that  $h_{u,\delta}$ satisfies the transmission conditions \eqref{e2}:
\begin{equation*}\label{bobbybrown44}
\llbracket h_{u,\delta} \rrbracket = \llbracket \sigma_\delta \partial_z h_{u,\delta}\rrbracket =0\quad\text{on }\  \Sigma(u)\,.
\end{equation*}

Simple computations show that the example provided in \eqref{zz1} satisfies \eqref{bobbybrown} with 
\begin{equation*}
	h_b(x,z,w) = \frac{z+H+1}{1+\sigma(x)(H+w)}\,, \qquad (x,z,w)\in \bar{D}\times [-H-1,-H]\times [-H,\infty)\,,
\end{equation*}
and
\begin{equation*}
	h(x,z,w) = \frac{1+\sigma(x)(H+z)}{1+\sigma(x)(H+w)}\,, \qquad (x,z,w)\in \bar{D}\times [-H,\infty) \times [-H,\infty)\,.
\end{equation*}

\subsection{Auxiliary results}

We begin with some  properties of the function $h_{u,\delta}$ that we derive from assumptions \eqref{sigma}-\eqref{bobbybrown} imposed above. For further use, we set 
$$
\sigma_{max}:=1 + \max_{\bar D\times [-H-1,-H]}\sigma\,.
$$

\begin{lemma}\label{LL0}
Assume \eqref{sigma} and \eqref{bobbybrown}.

\noindent \textbf{(i)} There is a constant $c_0>0$ depending on $m$, $L$, and $\sigma_{max}$ such that, given $u\in \bar{S}_0$ and $\delta\in (0,1)$, 
\begin{equation}\label{H1}
\int_{\Omega_\delta(u)} \sigma_\delta \vert \nabla h_{u,\delta}\vert^2\, \rd (x,z) \le c_0\big(1+\|u\|_{L_2(D)}^2+ \| \partial_x u\|_{L_2(D)}^2\big)\,.
\end{equation}

\noindent \textbf{(ii)} Suppose that $u_\delta\rightarrow u$ in $H^1(D)$ as $\delta\rightarrow 0$ and that $-H\le u_\delta$ in $D$. Then $$M:= \sup_{\delta\in (0,1)} \|u_\delta\|_{L_\infty(D)}<\infty$$ and, as $\delta\rightarrow 0$,
\begin{subequations}
\begin{align}
&h_{u_\delta,\delta} \to h_{u}\quad\text{in}\quad   H^1(D\times (-H,M))\,,\label{s1}\\
&\mathfrak{h}_{u_\delta}  \to \mathfrak{h}_{u} \quad\text{in}\quad L_2(D)\,,\label{s2}\\
&h_{u_\delta}(.,-H) \to h_{u}(.,-H) \quad\text{in}\quad L_2(D)\,.\label{s3}
\end{align}
Moreover,
\begin{equation}\label{24d}
\lim_{\delta\to 0} \int_{\Omega(u_\delta)} |\nabla h_{u_\delta,\delta}|^2 d(x,z) = \int_{\Omega(u)} |\nabla h_{u}|^2 d(x,z)\,.
\end{equation}
\end{subequations}
\end{lemma}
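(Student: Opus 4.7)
The plan is to prove part (i) directly by splitting the integral $\int_{\Omega_\delta(u)} \sigma_\delta|\nabla h_{u,\delta}|^2$ into contributions from the rescaled dielectric layer $\mathcal{R}_\delta$ and the free region $\Omega(u)$, exploiting the scaling \eqref{sigmad} in the layer and the singular bound \eqref{bobbybrown6} in the free space. For part (ii), the key preliminary observation is that $u_\delta \to u$ in $H^1(D)$ gives \emph{uniform} convergence on $\bar D$ via the embedding $H^1(D) \hookrightarrow C(\bar D)$, which at once provides the uniform bound $M<\infty$ and reduces the four convergence statements to continuity of Nemytskii-type operators of the form $w \mapsto h(\cdot,\cdot,w(\cdot))$.

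For the bound \eqref{H1} in (i), on $\mathcal{R}_\delta$ the chain rule gives $\partial_z h_{u,\delta} = \delta^{-1}(\partial_z h_b)$ and $\partial_x h_{u,\delta} = (\partial_x h_b) + (\partial_w h_b)\partial_x u$, all evaluated at $(x,-H+(z+H)/\delta,u(x))$. Multiplying by $\sigma_\delta = \delta\sigma \le \delta\,\sigma_{\max}$ and integrating over the slab of height $\delta$, the two $\delta$-factors exactly absorb the $\delta^{-2}$ from $|\partial_z h_{u,\delta}|^2$, and \eqref{bobbybrown5} yields the desired bound. On $\Omega(u)$, where $\sigma_\delta=1$ and $h_{u,\delta}(x,z)=h(x,z,u(x))$, invoking \eqref{bobbybrown6} gives an integrand bounded by $C(1+u^2+|\partial_x u|^2)/(H+u)$; integrating $z$ over $(-H,u(x))$ produces a factor $H+u(x)$ that cancels the singularity and delivers $C(1+\|u\|_{L_2}^2+\|\partial_x u\|_{L_2}^2)$.

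For (ii), the statements \eqref{s2} and \eqref{s3} reduce to convergence of $h_b(\cdot,-H-1,u_\delta)$ and $h(\cdot,-H,u_\delta)$ in $L_2(D)$, which follow immediately from uniform convergence of $u_\delta$ combined with continuity of $h_b$ and $h$. For \eqref{s1}, note that for $z>-H$ the definition \eqref{bobbybrown40} gives $h_{u_\delta,\delta}(x,z)=h(x,z,u_\delta(x))$, so the claim becomes $h(\cdot,\cdot,u_\delta)\to h(\cdot,\cdot,u)$ in $H^1(D\times(-H,M))$. The $L_2$ piece is trivial from uniform convergence; for the gradient, the chain rule expansion $\partial_x [h(x,z,u_\delta)] = (\partial_x h)(x,z,u_\delta) + (\partial_w h)(x,z,u_\delta)\,\partial_x u_\delta$ is handled by the standard add-and-subtract trick, using that $(\partial_x h)$, $(\partial_z h)$, $(\partial_w h)$ are continuous, hence uniformly bounded and uniformly continuous on the compact set $\bar D\times[-H,M]\times[-H,M]$, and combining this with $\partial_x u_\delta \to \partial_x u$ in $L_2(D)$.

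Finally, \eqref{24d} follows from \eqref{s1} by writing $\int_{\Omega(u_\delta)}|\nabla h_{u_\delta,\delta}|^2 = \int_{D\times(-H,M)} |\nabla h_{u_\delta,\delta}|^2\,\mathbf{1}_{\Omega(u_\delta)}$: the integrand $|\nabla h_{u_\delta,\delta}|^2$ converges to $|\nabla h_u|^2$ in $L_1(D\times(-H,M))$ by \eqref{s1}, while $\mathbf{1}_{\Omega(u_\delta)}\to\mathbf{1}_{\Omega(u)}$ pointwise a.e.\ because the graph $\{z=u(x)\}$ has two-dimensional Lebesgue measure zero, and a short add-and-subtract argument closes the limit. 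The only real subtlety lies in part (i), where one must carefully track how the three $\delta$-factors (from $\sigma_\delta$, from the layer volume, and from $\partial_z h_{u,\delta}$) balance to $O(1)$: the scaling of $\sigma_\delta$ in \eqref{sigmad} is precisely what makes this balance work, which is the analytic heart of the reinforced-limit heuristic.
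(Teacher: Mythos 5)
Your proposal is correct and follows essentially the same route as the paper: for (i) the same splitting of $\int_{\Omega_\delta(u)}\sigma_\delta|\nabla h_{u,\delta}|^2$ into the layer $\mathcal{R}_\delta$ (where the $\delta$ from $\sigma_\delta$, the layer thickness $\delta$, and the $\delta^{-2}$ from $\partial_z h_{u,\delta}$ balance) and the free part $\Omega(u)$ (where the $z$-integration cancels the $(H+u)^{-1}$ singularity of \eqref{bobbybrown6}), and for (ii) the same reliance on the embedding $H^1(D)\hookrightarrow C(\bar D)$ and the smoothness of $h$, $h_b$. Your treatment of (ii) — the chain-rule/add-and-subtract argument for \eqref{s1} and the $\mathbf{1}_{\Omega(u_\delta)}\to\mathbf{1}_{\Omega(u)}$ a.e.\ argument for \eqref{24d} — simply spells out details the paper leaves implicit, and is sound.
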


\begin{proof}
 \textit{\textbf{(i)}} Using \eqref{sigma}, \eqref{bobbybrown40}, \eqref{h000}, and the definition of $\Omega_\delta(u)$ we have
\begin{equation*}
\begin{split}
\int_{\Omega_\delta(u)} \sigma_\delta \vert \nabla h_{u,\delta}\vert^2\, \rd (x,z) 
&=\int_{\Omega(u)} \vert \partial_x h(x,z,u(x))+\partial_x u(x) \partial_w h(x,z,u(x))\vert^2\, \rd (x,z)\\
& \quad +\int_{\Omega(u)} \vert \partial_z h(x,z,u(x))\vert^2\, \rd (x,z)\\
&\quad 
+\delta \int_{\mathcal{R}_\delta} \sigma(x,z)  \left\vert  \partial_x h_b\left(x,-H+\frac{z+H}{\delta}, u(x)\right)\right.\\
&\qquad\qquad\qquad\qquad \left.+\partial_x u(x) \partial_w h_b\left(x,-H+\frac{z+H}{\delta}, u(x)\right)\right\vert^2\, \rd (x,z)\\
&\quad 
+ \delta \int_{\mathcal{R}_\delta} \sigma(x,z)  \left\vert \frac{1}{\delta} \partial_z h_b\left(x,-H+\frac{z+H}{\delta}, u(x)\right)\right\vert^2\, \rd (x,z)
\,.
\end{split}
\end{equation*}
Invoking \eqref{bobbybrown5}-\eqref{bobbybrown6}  and  Young's inequality, we derive
\begin{equation*}
\begin{split}
\int_{\Omega_\delta(u)} \sigma_\delta \vert \nabla h_{u,\delta}\vert^2\, \rd (x,z) & \le 2m\int_{\Omega(u)} \frac{1+ u(x)^2+(\partial_x u(x))^2}{H+u(x)}\,\rd (x,z)\\
&\qquad + m \int_{\Omega(u)} \frac{1+ u(x)^2}{H+u(x)}\,\rd (x,z)\\
&\qquad +2m \delta\sigma_{max}\int_{\mathcal{R}_\delta}\left(1+ u(x)^2 + (\partial_x u(x))^2\right)\,\rd (x,z)\\
&\qquad  +\frac{m \sigma_{max}}{\delta}\int_{\mathcal{R}_\delta}\left( 1+ u(x)^2\right)\,\rd (x,z)\\
&\le m (1+\sigma_{max})\left[3\left(\vert D\vert +\|u\|_{L_2(D)}^2\right)+ 2 \| \partial_x u\|_{L_2(D)}^2\right]\,.
\end{split}
\end{equation*}
This proves \textit{\textbf{(i)}}.

\smallskip 

\noindent \textit{\textbf{(ii)}} First, $M$ is well-defined and finite owing to the continuous embedding of $H^1(D)$ in $C(\bar D)$ and the strong convergence of $(u_\delta)_{\delta\in (0,1)}$ in $H^1(D)$. Next, the stated convergences readily follow from the smoothness of $h$ and $h_b$, from the convergence of $u_\delta\rightarrow u$ in $H^1(D)$, and the continuous embedding of $H^1(D)$ in $C(\bar{D})$.
\end{proof}

\section{Convergence of minimizers}\label{S3}

Three steps are needed to prove Theorem~\ref{TT1}: we begin by establishing in Section~\ref{S3.1} the convergence of the electrostatic energy $E_{e,\delta}$ as $\delta\to 0$, building upon the analysis performed in \cite{AMOP20} for a reduced problem. This convergence, along with the weak lower semicontinuity of the mechanical energy $E_m$, leads us to the $\Gamma$-convergence of $E_\delta$ to $E$ in $L_2(D)$, see Section~\ref{S3.2}. Such a property provides information on the relationship between minimizers for the cases $\delta>0$ and $\delta=0$, which we use in Sections~\ref{S3.3}--\ref{S3.4} to complete the proof of Theorem~\ref{TT1}.

\subsection{Convergence of the electrostatic energy}\label{S3.1}

 Building upon the analysis performed in \cite{AMOP20}, we investigate the limit of the electrostatic energy $E_{e,\delta}$ as $\delta\to 0$. Recalling that the main outcome of \cite{AMOP20} is that 
\begin{equation*}
\lim_{\delta\to 0} E_{e,\delta}(u) = E_{e,0}(u)
\end{equation*}
for any $u\in \bar{S}_0$, we extend this result to a sequence $(u_\delta)_{\delta\in (0,1)}$ in $\bar{S}_0$ and show that $(E_{e,\delta}(u_\delta))_{\delta\in (0,1)}$ converges to $E_{e,0}(u)$ as $\delta\to 0$, provided that $(u_\delta)_{\delta\in (0,1)}$ converges to $u$ in $H^1(D)$. More precisely, consider a sequence $(u_\delta)_{\delta\in (0,1)}$ in $\bar{S}_0$ and $u\in \bar{S}_0$ such that
\begin{subequations}\label{i}
\begin{equation}\label{ia}
u_\delta\to u\ \text{ in }\ H^1(D)\  \text{ as }\  \delta\to 0\,,\qquad -H\le u_\delta(x)\,, \quad x\in D\,.
\end{equation}
Observe that the convergence \eqref{ia} and the continuous embedding of $H^1(D)$ in $C(\bar{D})$ ensure that
\begin{equation}
0 \le H + u_\delta(x) \,, H + u(x) \le M:= \sup_{\delta\in (0,1)} \|H+u_\delta\|_{L_\infty(D)}\,,\quad x\in D\,. \label{ib}
\end{equation}
\end{subequations}

\begin{proposition}\label{PP1}
Suppose \eqref{i}  and set $\Omega(M):=D\times (-H,M)$. Then 
$$
\lim_{\delta\to 0} E_{e,\delta}(u_\delta)=E_{e,0} (u)
$$
and
$$
\psi_{u_\delta,\delta}-h_{u_\delta,\delta}\longrightarrow \psi_{u}-h_{u} \quad\text{in }\ L_2(\Omega(M))\quad \text{as }\ \delta\to 0\,.
$$
\end{proposition}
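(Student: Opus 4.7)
The plan is to follow the $\Gamma$-convergence strategy of \cite{AMOP20}, which handled a fixed deflection, and adapt it to the sequence $(u_\delta)$ by exploiting the convergences collected in Lemma~\ref{LL0}. Both the domain $\Omega_\delta(u_\delta)$ and the data $h_{u_\delta,\delta}$ vary with $\delta$, so the argument requires uniform control of the potentials on a common reference geometry.

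I would first exploit minimality: $\psi_{u_\delta,\delta}$ minimizes $v\mapsto\int_{\Omega_\delta(u_\delta)}\sigma_\delta|\nabla v|^2\,\rd(x,z)$ among $v\in H^1(\Omega_\delta(u_\delta))$ with Dirichlet data $h_{u_\delta,\delta}$. Combined with Lemma~\ref{LL0}(i) and the $H^1(D)$-bound on $u_\delta$ coming from \eqref{i}, this gives $\int\sigma_\delta|\nabla\psi_{u_\delta,\delta}|^2\le C$ uniformly in $\delta$. Setting $\phi_\delta:=\psi_{u_\delta,\delta}-h_{u_\delta,\delta}$, which vanishes on $\partial\Omega_\delta(u_\delta)$, a Poincar\'e inequality on $\Omega(u_\delta)$ yields a uniform bound of $\phi_\delta$ in $H^1(\Omega(u_\delta))$; extending by zero above the graph $\{z=u_\delta(x)\}$ provides a uniform bound in $H^1(\Omega(M))$. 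On the layer, the change of variable $\zeta=-H+(z+H)/\delta$ turns the bound $\delta\int_{\mathcal{R}_\delta}\sigma|\nabla\phi_\delta|^2\,\rd(x,z)\le C$ into a uniform $H^1$-bound on the rescaled profile $\tilde\phi_\delta(x,\zeta):=\phi_\delta(x,-H+\delta(\zeta+H))$ on the fixed cylinder $D\times(-H-1,-H)$. Extracting subsequences gives $\phi_\delta\rightharpoonup\phi$ in $H^1(\Omega(M))$ (strongly in $L_2$ by Rellich) and $\tilde\phi_\delta\rightharpoonup\tilde\phi$ in $H^1$ of the layer. The reinforced scaling forces $\partial_\zeta\tilde\phi\equiv 0$, so $\tilde\phi$ depends only on $x$; matching traces at $\zeta=-H$ with those of $\phi$ and using \eqref{bobbybrown40} and \eqref{Kbound0} at $\zeta=-H-1$ pins the layer profile to an affine interpolation whose bulk energy reduces to a boundary integral over $\Sigma(u)$.

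Passing to the limit in the weak form of \eqref{TMP}, using Lemma~\ref{LL0}(ii) for the convergence of $h_{u_\delta,\delta}$ and $\mathfrak{h}_{u_\delta}$, identifies $\psi_u:=\phi+h_u$ as the unique solution to \eqref{MBP0}. The convergence $E_{e,\delta}(u_\delta)\to E_{e,0}(u)$ then follows by combining weak lower semicontinuity for the $\liminf$ with a recovery sequence for the $\limsup$, constructed by patching $\psi_u$ on $\Omega(u_\delta)$ with an affine-in-$\zeta$ extension through the rescaled layer; uniqueness of $\psi_u$ upgrades this to full-sequence convergence, and the strong $L_2(\Omega(M))$-statement follows from the Rellich compactness already invoked above. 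The main obstacle is the identification of the limiting layer contribution
$$
\frac{\delta}{2}\int_{\mathcal{R}_\delta}\sigma|\nabla\psi_{u_\delta,\delta}|^2\,\rd(x,z)\longrightarrow\frac{1}{2}\int_D\sigma(x,-H)\bigl|\psi_u(x,-H)-\mathfrak{h}_u(x)\bigr|^2\,\rd x,
$$
where the vanishing bulk energy in the shrinking layer must convert into a Robin-type surface term along $\Sigma(u)$. This conversion depends on the strong $L_2(D)$-convergence of the interface traces $\phi_\delta(\cdot,-H)\to\phi(\cdot,-H)$, provided by compact trace embeddings on the fixed domain $\Omega(M)$, together with the fact that minimality forces the rescaled profile to be asymptotically affine in $\zeta$ between $\psi_u(\cdot,-H)$ and $\mathfrak{h}_u$.
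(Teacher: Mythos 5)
Your overall architecture (uniform energy bounds, zero extension to the fixed box, rescaling of the layer, liminf plus an explicit competitor for the limsup, uniqueness to upgrade to the whole sequence) is close in spirit to the paper's $\Gamma$-convergence proof, but two steps as written do not work. First, the treatment of the layer is incorrect: with $\sigma_\delta=\delta\sigma$ on $\mathcal{R}_\delta$ and $\zeta=-H+(z+H)/\delta$, the bound $\delta\int_{\mathcal{R}_\delta}\sigma|\nabla\phi_\delta|^2\,\rd(x,z)\le C$ transforms into $\int\sigma\,|\partial_\zeta\tilde\phi_\delta|^2\,\rd(x,\zeta)+\delta^2\int\sigma\,|\partial_x\tilde\phi_\delta|^2\,\rd(x,\zeta)\le C$, so you get neither a uniform $H^1$ bound on $\tilde\phi_\delta$ (the $x$-derivative is only controlled after multiplication by $\delta$) nor, crucially, $\partial_\zeta\tilde\phi\equiv 0$ in the limit. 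If the limit profile were independent of $\zeta$, then the zero value at $\zeta=-H-1$ (from $\phi_\delta=0$ at $z=-H-\delta$) would force $\phi(\cdot,-H)=0$, i.e.\ a Dirichlet condition $\psi_u=h_u$ on $\Sigma(u)$ instead of the Robin condition \eqref{MBP3}; indeed this claim contradicts your own next sentence about an affine interpolation. The correct mechanism is the one in the paper: for the lower bound, Cauchy--Schwarz in $z$ gives $|(\vartheta_\delta+h_{u_\delta,\delta})(x,-H)-\mathfrak{h}_{u_\delta}(x)|^2\le\delta\int_{-H-\delta}^{-H}|\partial_z(\vartheta_\delta+h_{u_\delta,\delta})|^2\,\rd z$, which together with the strong $L_2(D)$ convergence of the traces produces the surface term; the affine-in-$\zeta$ profile enters only in the upper-bound competitor. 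Your suggestion to derive ``asymptotic affineness'' from minimality before the limsup bound is circular, since it presupposes a competitor whose energy is already known to converge to the claimed limit.

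Second, the competitor/recovery construction is the technical heart of the problem and is not addressed: ``patching $\psi_u$ on $\Omega(u_\delta)$'' does not yield an admissible function, because $\psi_u$ is defined only on $\Omega(u)$ (which need not contain $\Omega(u_\delta)$), it must be corrected so as to equal $h_{u_\delta,\delta}$ on the graph of $u_\delta\neq u$, only $H^1$-regularity of $\psi_u$ is available here (the paper points out that $H^2$-regularity requires \eqref{bryanadams}), and $\Omega(u)$ may fail to be Lipschitz when the coincidence set is nonempty. The paper resolves exactly this by reflecting across $z=-H$, solving auxiliary Dirichlet problems on the perturbed domains $\hat\Omega(u_\delta)$ and invoking \v{S}ver\'ak-type stability under Hausdorff convergence of planar domains to get strong $H^1$ convergence of admissible functions, and then inserting the affine layer interpolation with a lateral cutoff $\tau_\delta$ to restore the zero boundary values on $\{\pm L\}\times(-H-\delta,-H)$. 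The same moving-domain issue afflicts your alternative identification step ``pass to the limit in the weak form of \eqref{TMP}'': one needs test functions adapted to $\Omega_\delta(u_\delta)$ that converge suitably, and one must also show that the weak limit of the zero-extended $\phi_\delta$ actually belongs to $H_B^1(\Omega(u))$. Without an argument replacing these constructions, both the identification of the limit as the solution of \eqref{MBP0} and the limsup inequality remain unproved.
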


\begin{proof}
We use a $\Gamma$-convergence approach combining arguments from \cite[Proposition~4.1]{LNW20} and \cite[Theorem~3.1]{AMOP20}. Let $\mathcal{O}_M := D\times (-H-1,M)$ and define, for $\delta\in (0,1)$,
$$
G_\delta[\vartheta]:=\left\{\begin{array}{ll} \dfrac{1}{2}\displaystyle\int_{ \Omega_\delta(u_\delta)} \sigma_\delta \vert\nabla (\vartheta+h_{u_\delta,\delta})\vert^2\,\rd (x,z)\,, & \vartheta\in H_0^1(\Omega_\delta(u_\delta))\,,\\
\infty\,, & \vartheta\in L_2(\mathcal{O}_M)\setminus H_0^1(\Omega_\delta(u_\delta))\,.
\end{array}\right. 
$$
Then
\begin{equation}\label{G1}
E_{e,\delta}(u_\delta)=-G_\delta[\chi_{u_\delta,\delta}] \quad \text{ with }\quad \chi_{u_\delta,\delta}:=\psi_{u_\delta,\delta}-h_{u_\delta,\delta}\in H_0^1(\Omega_\delta(u_\delta))\,,
\end{equation}
and $\chi_{u_\delta,\delta}$ is the unique minimizer of $G_\delta$ on $H_0^1(\Omega_\delta(u_\delta))$, see \cite[Proposition~3.1]{LW19}.

We next introduce $H_B^1(\Omega(u))$ as the closure in $H^1(\Omega(u))$ of the set
\begin{equation*}
\begin{split}
C_B^1(\overline{\Omega(u)}):=\Big\{\vartheta\in C^1(\overline{\Omega(u)}) : \ 
&\vartheta(x,u(x))=0\,,\ x\in D\\[-0.1cm]
& \text{ and }\vartheta(x,z)=0\,,\ (x,z)\in  \{\pm L\}\times (-H,0] \Big\}\,.
\end{split}
\end{equation*}
Noticing that $\vartheta(x,u(x))=\vartheta(x,-H)=0$ for $x\in \mathcal{C}(u)$ and $\vartheta\in C_B^1(\overline{\Omega(u)})$, we agree upon setting $\vartheta(x,u(x))=\vartheta(x,-H):=0$ for all $x\in \mathcal{C}(u)$ and $\vartheta\in H_B^1(\Omega(u))$ in the sequel. 
Now, given $\vartheta\in H_B^1(\Omega(u))$, we define
\begin{equation}\label{G}
G[\vartheta]:= \frac{1}{2}\int_{ \Omega(u)}   \big\vert\nabla (\vartheta+h_u)\big\vert^2\,\rd (x,z) + \frac{1}{2}\int_{ D} \big(\sigma \big\vert \vartheta+h_u-\mathfrak{h}_u\big\vert^2\big)(x,-H)\,\rd x \,,
\end{equation}
with $\mathfrak{h}_u$ defined in \eqref{h0},  and
$$
G[\vartheta]:= \infty\,,\quad \vartheta\in L_2(\mathcal{O}_M)\setminus H_B^1(\Omega(u))\,.
$$
Then
\begin{equation}\label{G2}
E_{e,0}(u)=-G[\chi_{u}] \quad\text{with}\quad \chi_{u}:=\psi_{u}-h_{u}\in H_B^1(\Omega(u))\,,
\end{equation}
and $\chi_{u}$ is the unique minimizer of $G$ on $H_B^1(\Omega(u))$, see \cite[Proposition~3.3]{AMOP20}. 
We now claim that
\begin{equation}\label{GG}
\Gamma-\lim_{\delta\rightarrow 0} G_\delta =G\quad\text{in }\   L_2(\mathcal{O}_M)\,.
\end{equation}
For \eqref{GG} we have to prove the \textit{asymptotic weak lower semicontinuity} and the existence of a \textit{recovery sequence}.

\bigskip

\noindent \textit{(i) Asymptotic weak lower semicontinuity.} Consider $\vartheta_0\in L_2(\mathcal{O}_M)$ and a sequence $(\vartheta_\delta)_{\delta\in (0,1)}$ in $L_2(\mathcal{O}_M)$ such that
\begin{equation}\label{k1}
\vartheta_\delta\rightarrow \vartheta_0\quad \text{in}\quad L_2(\mathcal{O}_M)\,.
\end{equation} 
 We shall then show that
\begin{equation}
G[\vartheta_0]\le\liminf_{\delta\rightarrow 0} G_\delta[\vartheta_\delta]\,. \label{AWLS}
\end{equation}
Due to the definitions of $G_\delta$ and $G$, we only need to consider the case where $\vartheta_\delta\in H_0^1(\Omega_\delta(u_\delta))$ for $\delta\in (0,1)$ and
\begin{equation}\label{1}
\sup_{\delta\in (0,1)} G_\delta[\vartheta_\delta]<\infty\,.
\end{equation}
We may then extend $\vartheta_\delta$ trivially to $\Omega(M)=D\times (-H,M)$, so that $\vartheta_\delta\in H_B^1(\Omega(M))$. We next infer from \eqref{sigma} and the definition of $G_\delta$ that
\begin{align*}
\int_{\Omega(M)} |\nabla\vartheta_\delta|^2\,\rd (x,z) 
& = \int_{\Omega(u_\delta)} \sigma_\delta |\nabla\vartheta_\delta|^2\,\rd (x,z) 
\\
& \le 2 \int_{\Omega(u_\delta)} \sigma_\delta |\nabla(\vartheta_\delta+h_{u_\delta,\delta})|^2 \, \rd(x,z) + 2 \int_{\Omega(u_\delta)} \sigma_\delta |\nabla h_{u_\delta,\delta}|^2 \, \rd(x,z)
\\
& \le 2  G_\delta[\vartheta_\delta]+ 2 \int_{\Omega(u_\delta)} \sigma_\delta |\nabla h_{u_\delta,\delta}|^2\,\rd (x,z)\,,
\end{align*}
and the right-hand side of the above inequality is bounded by \eqref{i}, \eqref{1}, and Lemma ~\ref{LL0}. Consequently, taking also into account \eqref{k1} and the property $\vartheta_\delta\in H_0^1(\Omega_\delta(u_\delta))$ for $\delta\in (0,1)$, we conclude that $(\vartheta_\delta)_{\delta\in (0,1)}$ is bounded in $H_B^1(\Omega(M))$.  Owing to \eqref{i} and Lemma~\ref{LL0}, we may assume without loss of generality that
\begin{equation}
\vartheta_\delta +h_{u_\delta,\delta} \rightharpoonup \vartheta_0 + h_u \quad \text{ in }\ H^1(\Omega(M))\,. \label{xp}
\end{equation}
Moreover, since $\Omega(M)$ is a Lipschitz domain, the embedding of $H^1(\Omega(M))$ in $H^{3/4}(\Omega(M))$ is compact, see \cite[Theorem~1.4.3.2]{Gr85}, while the trace operator is continuous from $H^{3/4}(\Omega(M))$ in $L_2(\partial\Omega(M))$, see \cite[Theorem~1.5.1.2]{Gr85}. We may thus assume without loss of generality that
\begin{equation}\label{x}
\vartheta_\delta\rightarrow \vartheta_0\quad\text{ in }\ L_2(\partial\Omega(M))\,.
\end{equation}
In particular, 
\begin{equation}\label{xiiia}
\vartheta_\delta(\cdot,-H)\rightarrow \vartheta_0 (\cdot,-H)\quad\text{in }\ L_2\big(D\big)\,,
\end{equation}
and it follows from \eqref{i}, \eqref{xiiia}, and Lemma~\ref{LL0} that
\begin{equation}\label{xi}
\begin{split}
\lim_{\delta\to 0} &\int_{D}\big(\sigma \big\vert\vartheta_\delta +h_{u_\delta} -\mathfrak{h}_{u_\delta}\big\vert^2\big)(x,-H)\,\rd x=
\int_{D}\big(\sigma\big\vert\vartheta_0+h_u -\mathfrak{h}_u\big\vert^2\big)(x,-H)\,\rd x\,.
\end{split}
\end{equation}
Next, arguing as in \cite[Proposition~4.1]{LNW20}, we deduce from \eqref{xp} and Lemma~\ref{LL0} that
\begin{align*}
& \liminf_{\delta\rightarrow 0} \int_{\Omega(u_\delta)}\vert\nabla(\vartheta_\delta+h_{u_\delta,\delta})\vert^2\,\rd (x,z) \\
& \qquad = \liminf_{\delta\rightarrow 0} \int_{\Omega(M)}\vert\nabla(\vartheta_\delta+h_{u_\delta,\delta})\vert^2\,\rd (x,z) - \lim_{\delta\rightarrow 0} \int_{\Omega(M)\setminus\Omega(u_\delta)}\vert\nabla h_{u_\delta,\delta}\vert^2\,\rd (x,z) \\
& \qquad \ge \int_{\Omega(M)}\vert\nabla(\vartheta_0+h_{u})\vert^2\,\rd (x,z) - \int_{ \Omega(M)\setminus \Omega(u)}\vert\nabla h_{u}\vert^2\,\rd (x,z) \\
& \qquad = \int_{\Omega(u)}\vert\nabla(\vartheta_0+h_{u})\vert^2\,\rd (x,z) \,.
\end{align*}
Hence, together with \eqref{xi},
\begin{equation}\label{xii}
\begin{split}
\liminf_{\delta\rightarrow 0} &\left\{\frac{1}{2}\int_{\Omega(u_\delta)}\vert\nabla(\vartheta_\delta+h_{u_\delta,\delta})\vert^2\,\rd (x,z) +\frac{1}{2}\int_{D}\big(\sigma \big\vert\vartheta_\delta +h_{u_\delta} -\mathfrak{h}_{u_\delta}\big\vert^2\big)(x,-H)\,\rd x\right\} \\
&\ge \frac{1}{2}\int_{\Omega(u)}\vert\nabla(\vartheta_0+h_{u})\vert^2\,\rd (x,z) +\frac{1}{2}\int_{D}\big(\sigma \big\vert\vartheta_0 +h_{u} -\mathfrak{h}_{u}\big\vert^2\big)(x,-H)\,\rd x \,.
\end{split}
\end{equation}
Moreover, \eqref{1} entails that
\begin{equation*}
\sup_{\delta\in (0,1)}\int_{\mathcal{R}_\delta}\sigma_\delta \big\vert\nabla(\vartheta_\delta +h_{u_\delta,\delta})\vert^2\,\rd (x,z)<\infty\,.
\end{equation*}
The continuity of $\sigma$ now warrants that
\begin{equation*}
\begin{split}
\liminf_{\delta\rightarrow 0} \, &\int_{\mathcal{R}_\delta }\sigma_\delta(x,z)\vert\nabla(\vartheta_\delta+h_{u_\delta,\delta})\vert^2\,\rd (x,z)\\
&= \liminf_{\delta\rightarrow 0}\, \delta\int_{\mathcal{R}_\delta }\sigma(x,-H)\vert\nabla(\vartheta_\delta+h_{u_\delta,\delta})\vert^2\,\rd (x,z)\\
&\ge \liminf_{\delta\rightarrow 0} \, \delta\int_{\mathcal{R}_\delta }\sigma(x,-H)\vert\partial_z(\vartheta_\delta+h_{u_\delta,\delta})\vert^2\,\rd (x,z)\,.
\end{split}
\end{equation*}
Since  $\vartheta_\delta(\cdot,-H-\delta)= 0$ a.e. in $D$, we infer from H\"older's inequality that
$$
\big\vert (\vartheta_\delta+h_{u_\delta,\delta})(x,-H)-h_{u_\delta,\delta}(x,-H-\delta)\big\vert^2\le \delta \int_{-H-\delta}^{-H}\vert\partial_z(\vartheta_\delta +h_{u_\delta,\delta})(x,z)\vert^2\,\rd z
$$
for a.e. $x\in D$. Combining the previous two estimates and using (see \eqref{bobbybrown40} and \eqref{h000}-\eqref{h0})
$$
h_{u_\delta,\delta}(x,-H)=h_{u_\delta}(x,-H)\,,\quad h_{u_\delta,\delta}(x,-H-\delta)  = \mathfrak{h}_{u_\delta}(x)\,, \qquad x\in D\,,
$$
we deduce from \eqref{i}, \eqref{xiiia}, and Lemma~\ref{LL0} that
\begin{equation}\label{xv}
\begin{split}
\liminf_{\delta\rightarrow 0} \,\frac{1}{2} \int_{\mathcal{R}_\delta } & \sigma_\delta(x,z)\vert\nabla(\vartheta_\delta+h_{u_\delta,\delta})\vert^2\,\rd (x,z)\\
&\ge \frac{1}{2}\int_{D}\sigma(x,-H)\big\vert\vartheta_0(x,-H)+h_u(x,-H)-\mathfrak{h}_u(x)\big\vert^2\,\rd x\,.
\end{split}
\end{equation}
Noticing finally that
\begin{equation*}
\begin{split}
\liminf_{\delta\rightarrow 0} G_\delta[\vartheta_\delta]&= 
\liminf_{\delta\rightarrow 0} \,\dfrac{1}{2}\int_{\Omega_\delta(u_\delta)} \sigma_\delta \vert\nabla (\vartheta_\delta+h_{u_\delta,\delta})\vert^2\,\rd (x,z)\\
&\ge \liminf_{\delta\rightarrow 0} \, \dfrac{1}{2}\int_{\mathcal{R}_\delta} \sigma_\delta \vert\nabla (\vartheta_\delta+h_{u_\delta,\delta})\vert^2\,\rd (x,z)\\
&\quad 
+ \liminf_{\delta\rightarrow 0} \bigg\{
\dfrac{1}{2}\displaystyle\int_{\Omega(u_\delta)}  \vert\nabla (\vartheta_\delta+h_{u_\delta,\delta})\vert^2\,\rd (x,z) \\
&\qquad\qquad\qquad\qquad  + \frac{1}{2}\int_{D}\big(\sigma\big\vert\vartheta_\delta+h_{u_\delta}-\mathfrak{h}_{u_\delta}\big\vert^2\big)(x,-H)\,\rd x\bigg\}\\
& \quad - \lim_{\delta\rightarrow 0}\frac{1}{2}\int_{D}\big(\sigma\big\vert\vartheta_\delta+h_{u_\delta}-\mathfrak{h}_{u_\delta}\big\vert^2\big)(x,-H)\,\rd x\,,
\end{split}
\end{equation*}
we readily obtain from \eqref{xi}, \eqref{xii}, and \eqref{xv} that
\begin{equation*}
\begin{split}
\liminf_{\delta\rightarrow 0} G_\delta[\vartheta_\delta]&\ge \dfrac{1}{2}\int_{\Omega(u)} \vert\nabla (\vartheta_0+h_{u})\vert^2\,\rd (x,z)+   \frac{1}{2}\int_{D}\big(\sigma\big\vert\vartheta_0+h_{u}-\mathfrak{h}_u\big\vert^2\big)(x,-H)\,\rd x\\
& = G[\vartheta_0]\,.
\end{split}
\end{equation*}
This is the asymptotic weak lower semicontinuity \eqref{AWLS}.

\medskip

\noindent\textit{(ii) Recovery sequence.}  Let $\hat{\Omega}(M):=D\times(-2H-M,M)$. Given an arbitrary function  $\vartheta\in H_B^1(\Omega(u))$ we define $\bar\vartheta\in H_0^1(\hat\Omega(M))$ by extending $\vartheta$ trivially to $D\times (-H,M)$ and then reflecting the outcome to $\hat{\Omega}(M)$; that is,
$$
\bar\vartheta(x,z):= \left\{ \begin{array}{ll} 0\,, & x\in D\,,\ u(x)<z<M\,, \\[0.1cm]
\vartheta(x,z)\,, &  x\in D\,,\ -H<z\le u(x)\,,\\[0.1cm]
\vartheta(x,-2H-z)\,, &  x\in D\,,\ -2H-u(x)<z\le -H\,,\\[0.1cm]
0\,, &  x\in D\,,\ -2H-M<z\le -2H-u(x)\,.\\
\end{array} \right.
$$
Then $F:=-\Delta \bar\vartheta\in H^{-1}(\hat\Omega(M))$. With
\begin{equation*}
\hat\Omega(u_\delta):=\Omega(u_\delta) \cup \big(D\times (-2H-M,-H]\big)\subset \hat\Omega(M)\,,
\end{equation*} 
the restriction of the distribution $F$ belongs to $H^{-1}(\hat\Omega(u_\delta))$. Thus, there is a unique variational solution $\hat\vartheta_\delta\in H_0^1(\hat\Omega(u_\delta))\subset H_0^1(\hat\Omega(M))$ to
$$
-\Delta \hat\vartheta_\delta=F \quad\text{in }\ \hat\Omega(u_\delta)\,,\qquad \hat\vartheta_\delta=0 \quad\text{on }\ \partial\hat\Omega(u_\delta)\,.
$$
If $d_H$ denotes  the Hausdorff distance in $\hat\Omega(M)$ (see \cite[Section~2.2.3]{HP05}), then, due to \eqref{i}  and the continuous embedding of $H_0^1(D)$ in $C(\bar{D})$, we have
$$
d_H\left( \hat\Omega(u_\delta),\hat\Omega(u) \right) \le \|u_\delta - u\|_{L_\infty(D)}\rightarrow 0\,.
$$
Since $\overline{\hat\Omega(M)}\setminus\hat\Omega(u_\delta)$ has a single connected component, it follows from \cite[Theorem~4.1]{Sv93} and \cite[Theorem~3.2.5]{HP05} that $\hat\vartheta_\delta\rightarrow \hat\vartheta$ in $H_0^1(\hat\Omega(M))$, where $\hat\vartheta\in H_0^1(\hat\Omega(M))$ is the unique variational solution to
$$
-\Delta \hat\vartheta=F  = -\Delta \bar{\vartheta} \quad\text{in }\ \hat\Omega(M)\,,\qquad \hat\vartheta=0 \quad\text{on }\ \partial\hat\Omega(M)\,.
$$
Clearly, since $\bar{\vartheta}$ and $\hat{\vartheta}$ both belong to $H_0^1(\hat\Omega(M))$, we deduce from the above identity that $\hat\vartheta=\bar\vartheta$. Hence,  
\begin{equation}\label{xvi}
\hat{\vartheta}_\delta  \rightarrow \bar\vartheta\quad \text{ in }\ H_0^1(\hat{\Omega}(M))\,.
\end{equation}
Considering the corresponding restrictions to $\Omega(M)$ yields 
\begin{equation}\label{xvii}
\hat\vartheta_\delta\rightarrow \bar\vartheta\quad \text{ in }\ H^1(\Omega(M))\,.
\end{equation}
Set
$$
\tau_\delta(x):= \left\{ \begin{array}{ll} 1\,, & L-\vert x\vert>\sqrt{\delta}\,,\\ \vspace{-3mm} \\[0.05cm]
\displaystyle\frac{L-\vert x\vert}{\sqrt{\delta}}\,, & L-\vert x\vert\le\sqrt{\delta}\,, \end{array} \right.\qquad x\in D\, ,
$$
and introduce
\begin{equation*}
\begin{split}
\vartheta_\delta(x,z):=&\ \frac{z+H+\delta}{\delta}\hat\vartheta_\delta(x,z) +\frac{z+H+\delta}{\delta}\big[ h_{u_\delta,\delta}(x,-H)-h_{u_\delta,\delta}(x,-H-\delta)\big] \tau_\delta (x)\\
&-\big[ h_{u_\delta,\delta}(x,z)-h_{u_\delta,\delta}(x,-H-\delta)\big]  \tau_\delta (x)\,,\quad (x,z)\in \mathcal{R}_\delta \,,
\end{split}
\end{equation*}
and
$$
\vartheta_\delta(x,z):=\hat\vartheta_\delta(x,z)\,,\quad (x,z)\in \Omega(u_\delta)\,.
$$
The smoothness and definitions of $\hat\vartheta_\delta$,  $h_{u_\delta,\delta}$, and $\tau_\delta$ imply that $\vartheta_\delta\in H^1(\mathcal{R}_\delta )\cap H^1(\Omega(M))$ and thus, since moreover $\llbracket \vartheta_\delta\rrbracket=0$ on $\Sigma(u_\delta)$, we deduce that $\vartheta_\delta\in H^1(\Omega_\delta(u_\delta))$. By construction, $\vartheta_\delta$ vanishes on $\partial \Omega_\delta(u_\delta)$, hence $\vartheta_\delta\in H_0^1(\Omega_\delta(u_\delta))$. We now claim that $(\vartheta_\delta)_{\delta\in (0,1)}$ is a recovery sequence for $\vartheta$; that is,
\begin{equation}\label{RS}
G[\vartheta]=\lim_{\delta\rightarrow 0} G_\delta[\vartheta_\delta]\,.
\end{equation}
First, using that $\hat\vartheta_\delta=0$ in $\Omega(M)\setminus\Omega(u_\delta)$ and $\vartheta_\delta=\hat\vartheta_\delta$ in $\Omega(u_\delta)$ along with \eqref{i}, Lemma ~\ref{LL0}, and \eqref{xvii}, it is not difficult to see that
\begin{equation}\label{xviii}
\lim_{\delta\to 0}\, \frac{1}{2}\int_{\Omega(u_\delta)}\vert \nabla(\vartheta_\delta +h_{u_\delta,\delta})\vert^2\,\rd (x,z)=\frac{1}{2}\int_{\Omega(u)}\vert \nabla(\vartheta +h_{u})\vert^2\,\rd (x,z)\,.
\end{equation}
Next, for $(x,z)\in \mathcal{R}_\delta$, we have
\begin{equation}\label{xx}
\begin{split}
\partial_z(\vartheta_\delta+h_{u_\delta,\delta})(x,z)&=\frac{1}{\delta}\hat\vartheta_\delta(x,z) + \frac{1}{\delta}\big[ h_{u_\delta,\delta}(x,-H)-h_{u_\delta,\delta}(x,-H-\delta)\big] \tau_\delta (x) \\
& \qquad +\frac{z+H+\delta}{\delta}\partial_z\hat\vartheta_\delta(x,z)+\big(1-\tau_\delta (x) \big)\partial_z h_{u_\delta,\delta}(x,z)\,,
\end{split}
\end{equation}
 and we aim at identifying the limit of the right-hand side of \eqref{xx} as $\delta\rightarrow 0$. Let us first note that, for $z\in (-H-\delta,-H)$,
\begin{align*}
\int_D \left| \hat{\vartheta}_\delta(x,z) - \hat{\vartheta}_\delta(x,-H) \right|^2\,\rd x & \le \int_D |H+z| \int_z^{-H} \left| \partial_z \hat{\vartheta}_\delta \right|^2\, \rd z\,\rd x \\
& \le \delta \int_{\mathcal{R}_\delta} \left| \nabla \hat{\vartheta}_\delta \right|^2\,\rd (x,z)\,,
\end{align*}
from which, thanks to the convergence \eqref{xvi}, we deduce that
\begin{equation}
\lim_{\delta\rightarrow 0} \frac{1}{\delta}\int_D \left| \hat{\vartheta}_\delta(x,z) - \hat{\vartheta}_\delta(x,-H) \right|^2\,\rd x = 0\,. \label{xp1}
\end{equation}
Since \eqref{xvii} implies that $\hat\vartheta_\delta(\cdot,-H)\to \bar\vartheta(\cdot,-H)$ in $L_2(D)$, we infer from \eqref{xp1} and the continuity of $\sigma$ that
\begin{equation}\label{xxii}
\lim_{\delta\to 0}\,\frac{1}{\delta}\int_{-H-\delta}^{-H}\int_D \sigma(x,z)\vert \hat\vartheta_\delta(x,z)\vert^2\,\rd x\,\rd z=\int_D\sigma(x,-H)\vert\bar\vartheta(x,-H)\vert^2\,\rd x\,.
\end{equation}
Now, the definitions of $\sigma_\delta=\delta\sigma$ in $\mathcal{R}_\delta $ and $\tau_\delta$, the properties of $h_{u_\delta,\delta}$ (see Lemma~\ref{LL0}), and \eqref{xxii} yield
\begin{align}\label{xxiii}
&\lim_{\delta\to 0}\, \int_{\mathcal{R}_\delta}\sigma_\delta(x,z) \left\vert \frac{1}{\delta}\hat\vartheta_\delta(x,z) + \frac{1}{\delta}\big[ h_{u_\delta,\delta}(x,-H)-h_{u_\delta,\delta}(x,-H-\delta)\big] \tau_\delta (x)\right\vert^2\, \rd (x,z)
\nonumber \\
& \quad = \lim_{\delta\to 0}\,  \frac{1}{\delta} \int_{-H-\delta}^{-H}\int_D\sigma(x,z) \left\vert \hat\vartheta_\delta(x,z) + \big[ h_{u_\delta,\delta}(x,-H)-h_{u_\delta,\delta}(x,-H-\delta)\big] \tau_\delta (x)\right\vert^2\,\rd x\,\rd z
\nonumber \\
&\quad  = \int_D\sigma(x,-H)\vert\bar\vartheta(x,-H)+h_u(x,-H)-\mathfrak{h}_u(x)\vert^2\,\rd x\,.
\end{align}
Moreover, 
\begin{align*}
\int_{\mathcal{R}_\delta }\sigma_\delta(x,z) \left\vert\frac{z+H+\delta}{\delta}\partial_z\hat\vartheta_\delta(x,z)\right\vert^2\,\rd (x,z) & \le
\delta \sigma_{max} \int_{-H-\delta}^{-H}\int_D \left\vert \partial_z\hat\vartheta_\delta(x,z)\right\vert^2\,\rd x\rd z\\
&\le
\delta \sigma_{max} \|\hat\vartheta_\delta\|_{H^1(\hat\Omega(M))}^2
\end{align*}
so that, recalling that $(\hat\vartheta_\delta)_{\delta\in (0,1)}$ is bounded in $H^1(\hat\Omega(M))$ due to \eqref{xvi},
\begin{align}\label{xxiv}
\lim_{\delta\rightarrow 0}\, \int_{\mathcal{R}_\delta }\sigma_\delta(x,z) \left\vert\frac{z+H+\delta}{\delta}\partial_z\hat\vartheta_\delta(x,z)\right\vert^2\,\rd (x,z)=0\,.
\end{align}
Finally, observe from \eqref{bobbybrown40} that
\begin{equation*}
\partial_z h_{u_\delta,\delta}(x,z)=\frac{1}{\delta}\partial_z h_{b}\left(x,-H+\frac{z+H}{\delta},u_\delta(x)\right)\,,\quad (x,z)\in \mathcal{R}_\delta \,.
\end{equation*}
Hence,
\begin{align*}
\int_{\mathcal{R}_\delta }\sigma_\delta(x,z) &\left\vert\big(1-\tau_\delta (x) \big)\partial_z h_{u_\delta,\delta}(x,z)\right\vert^2\,\rd (x,z)\nonumber\\
&\le \sigma_{max}\int_{-H-1}^{-H}\int_D \big\vert \big(1-\tau_\delta (x) \big) \partial_z h_{b}\left(x,\xi,u_\delta(x)\right)\big\vert^2\,\rd x\rd \xi
\end{align*}
so that, using \eqref{i}, the definition of $\tau_\delta$, the continuity of $\partial_z h_b$, and Lebesgue's dominated convergence theorem, we derive
\begin{align}\label{xxv}
\lim_{\delta\rightarrow 0}\, \int_{\mathcal{R}_\delta }\sigma_\delta(x,z) \left\vert\big(1-\tau_{\delta} (x) \big)\partial_z h_{u_\delta,\delta}(x,z)\right\vert^2\,\rd (x,z)=0\,.
\end{align}
Consequently, we deduce from \eqref{xx} and \eqref{xxiii}-\eqref{xxv} that
\begin{equation}\label{xxvi}
\begin{split}
\lim_{\delta\rightarrow 0}\, &\int_{\mathcal{R}_\delta }\sigma_\delta(x,z) \vert\partial_z(\vartheta_\delta+h_{u_\delta,\delta})\vert^2\,\rd (x,z)\\
&= \int_{D}\sigma(x,-H) \left\vert \bar\vartheta(x,-H) +  h_{u}(x,-H)-\mathfrak{h}_u(x) \right\vert^2\,\rd x \,.
\end{split}
\end{equation}
Furthermore, we note that
\begin{equation*}
\begin{split}
\partial_x\vartheta_\delta(x,z)=&\ \frac{z+H+\delta}{\delta}\partial_x\hat\vartheta_\delta(x,z)\\
& +\frac{z+H+\delta}{\delta}\big[ \partial_x h_{u_\delta,\delta}(x,-H)- \partial_x h_{u_\delta,\delta}(x,-H-\delta)\big] \tau_\delta (x)\\
&+\frac{z+H+\delta}{\delta}\big[ h_{u_\delta,\delta}(x,-H)-  h_{u_\delta,\delta}(x,-H-\delta)\big]  \partial_x\tau_\delta (x)\\
&-\big[ \partial_xh_{u_\delta,\delta}(x,z)-\partial_xh_{u_\delta,\delta}(x,-H-\delta)\big]  \tau_\delta (x)\\
&-\big[ h_{u_\delta,\delta}(x,z)-h_{u_\delta,\delta}(x,-H-\delta)\big]  \partial_x\tau_\delta (x)
\end{split}
\end{equation*}
and, recalling \eqref{bobbybrown40},
\begin{equation*}
\begin{split}
 \partial_x h_{u_\delta,\delta}(x,z) & = \partial_x h_{b} \left(x,-H+\frac{z+H}{\delta}, u_\delta(x)\right) \\
& \qquad +\partial_x u_\delta(x) \partial_wh_b\left(x,-H+\frac{z+H}{\delta}, u_\delta(x)\right)
\end{split}
\end{equation*}
for $(x,z)\in \mathcal{R}_\delta$. Thus, since
$$
0\le \tau_\delta(x)\le 1\,,\qquad 0\le \frac{z+H+\delta}{\delta}\le 1\,,\qquad (x,z)\in\mathcal{R}_\delta \,,
$$
we easily obtain from $\sigma_\delta=\delta\sigma$ in $\mathcal{R}_\delta$ that
\begin{equation*}
\begin{split}
\int_{\mathcal{R}_\delta }&\sigma_\delta(x,z)\big\vert\partial_x(\vartheta_\delta+h_{u_\delta,\delta})\big\vert^2\rd (x,z)\\
&\le \  c\, \delta \sigma_{max}\int_{\mathcal{R}_\delta } \vert\partial_x\hat\vartheta_\delta(x,z)\vert^2\, \rd (x,z)\\
&\qquad + c\, \delta^2 \sigma_{max} \| h_b\|_{C^1}^2 \int_D   \left(1+\vert\partial_x u_\delta(x)\vert^2+\vert \partial_x\tau_\delta(x)\vert^2\right) \,\rd x\\
&\le c\, \delta \sigma_{max}\|\hat\vartheta_\delta\|_{H^1(\hat\Omega(M))}^2 
+  c\, \delta^2 \sigma_{max} \| h_b\|_{C^1}^2 \left(\vert D\vert +\|u_\delta\|_{H^1(D)}^2+\frac{\vert D\vert}{\delta}\right)\,,
\end{split}
\end{equation*}
where $\|h_b\|_{C^1}$ denotes the norm of $h_b$ in $C^1(\bar D\times[-H-1,-H]\times [-H,M])$, and $c$ is a positive constant depending on $D$ and $H$. Therefore, \eqref{i} and \eqref{xvi} entail
\begin{equation}\label{xxvii}
\lim_{\delta\rightarrow 0}\, \int_{\mathcal{R}_\delta }\sigma_\delta(x,z)\big\vert\partial_x(\vartheta_\delta+h_{u_\delta,\delta})\big\vert^2\,\rd (x,z)=0\,.
\end{equation}
Consequently, we derive from \eqref{xviii}, \eqref{xxvi}, and \eqref{xxvii} that
\begin{equation*}
\begin{split}
\lim_{\delta\rightarrow 0} G_\delta[\vartheta_\delta]&=
\lim_{\delta\rightarrow 0} \bigg(  \frac{1}{2}\int_{\Omega(u_\delta)}  \vert\nabla (\vartheta_\delta+h_{u_\delta,\delta})\vert^2\,\rd (x,z)\\
&\qquad\qquad+\frac{1}{2}\int_{\mathcal{R}_\delta} \sigma_\delta \big(\vert\partial_x(\vartheta_\delta+h_{u_\delta,\delta})\vert^2+\vert\partial_z(\vartheta_\delta+h_{u_\delta,\delta})\vert^2\big)\,\rd (x,z)
\bigg)
\\
&= \frac{1}{2}\int_{\Omega(u)}  \big\vert\nabla(\vartheta+h_u)\big\vert^2\,\rd (x,z)\\
&\qquad  +
\frac{1}{2}\int_{D}\sigma(x,-H) \left\vert \bar\vartheta(x,-H) +  h_{u}(x,-H)-\mathfrak{h}_{u}(x) \right\vert^2\,\rd x \\
& =  G[\vartheta]\,,
\end{split}
\end{equation*}
where we used that $ \bar\vartheta(x,-H)=\vartheta(x,-H)$ by construction of $ \bar\vartheta$. Hence, $(\vartheta_\delta)_{\delta\in (0,1)}$ is indeed a recovery sequence for $\vartheta$. 

\medskip

\noindent \textit{(iii) Convergence.} Since \textit{(i)} and \textit{(ii)} prove \eqref{GG}, we may invoke the Fundamental Theorem of $\Gamma$-convergence \cite[Corollary~7.20]{DaM93} to deduce from \eqref{G1}-\eqref{GG} that, as $\delta\to 0$,
$$
E_{e,\delta}(u_\delta)=-G_\delta[\chi_{u_\delta,\delta}]\longrightarrow -G[\chi_{u}]=E_{e,0}(u)
$$
and 
$$
\psi_{u_\delta,\delta}-h_{u_\delta,\delta} \longrightarrow \psi_{u}-h_{u} \quad\text{in }\ L_2(\Omega(M))\,.
$$
This proves Proposition~\ref{PP1}.
\end{proof}

\subsection{$\Gamma$-convergence of the total energy}\label{S3.2}

We now turn to the $\Gamma$-convergence of the total energy and first establish that the $H^2$-norm of $u$ is controlled by the total energy $E_\delta(u)$ (defined in \eqref{te}) and the $L_2$-norm of $u$, whatever the value of $\delta\in (0,1)$.

\begin{lemma}\label{LL1}
Given $\kappa>0$ there is  a constant $c(\kappa)>0$ such that, if $u\in\bar{S}_0$ satisfies 
\begin{equation}\label{A1}
\| u\|_{L_2(D)}\le \kappa\qquad\text{ and }\qquad E_\delta(u)\le \kappa\,,\quad \delta\in (0,1)\,,
\end{equation}
then
\begin{equation}\label{H2}
\| u\|_{H^2(D)} + \int_{\Omega_\delta(u)} \sigma_\delta \vert \nabla \psi_{u,\delta}\vert^2\, \rd (x,z)\le c(\kappa)\,,\quad \delta\in (0,1)\,.
\end{equation}
\end{lemma}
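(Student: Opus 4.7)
The plan is to bootstrap from the assumed energy bound by exploiting the variational characterization of $\psi_{u,\delta}$ and the coercivity of $E_m$, with Lemma~\ref{LL0}(i) providing the crucial bound on the Dirichlet energy of $h_{u,\delta}$.

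First, I would use the fact, recorded just before \eqref{G1} above, that $\chi_{u,\delta} := \psi_{u,\delta} - h_{u,\delta}$ is the unique minimizer on $H_0^1(\Omega_\delta(u))$ of the functional $\vartheta\mapsto \frac{1}{2}\int_{\Omega_\delta(u)}\sigma_\delta |\nabla(\vartheta+h_{u,\delta})|^2\,\rd(x,z)$. Comparing with $\vartheta=0$ and invoking Lemma~\ref{LL0}(i) yields
\begin{equation*}
\int_{\Omega_\delta(u)} \sigma_\delta |\nabla \psi_{u,\delta}|^2\,\rd(x,z) \le \int_{\Omega_\delta(u)} \sigma_\delta |\nabla h_{u,\delta}|^2\,\rd(x,z) \le c_0\bigl(1+\|u\|_{L_2(D)}^2+\|\partial_x u\|_{L_2(D)}^2\bigr).
\end{equation*}

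Next, since $E_\delta(u) = E_m(u) - \frac{1}{2}\int \sigma_\delta |\nabla\psi_{u,\delta}|^2$, the bound $E_\delta(u)\le \kappa$ combined with the definition of $E_m$ gives
\begin{equation*}
\frac{\beta}{2}\|\partial_x^2 u\|_{L_2(D)}^2 \le E_m(u) \le \kappa + \frac{c_0}{2}\bigl(1+\kappa^2+\|\partial_x u\|_{L_2(D)}^2\bigr).
\end{equation*}
Using the clamped boundary conditions, integration by parts and Cauchy-Schwarz give the interpolation $\|\partial_x u\|_{L_2(D)}^2\le \|u\|_{L_2(D)}\|\partial_x^2 u\|_{L_2(D)}\le \kappa \|\partial_x^2 u\|_{L_2(D)}$, so
\begin{equation*}
\frac{\beta}{2}\|\partial_x^2 u\|_{L_2(D)}^2 \le \kappa + \frac{c_0}{2}(1+\kappa^2) + \frac{c_0\kappa}{2}\|\partial_x^2 u\|_{L_2(D)}.
\end{equation*}
A standard application of Young's inequality to the last term on the right-hand side absorbs $\frac{\beta}{4}\|\partial_x^2 u\|_{L_2(D)}^2$ into the left-hand side and yields an upper bound on $\|\partial_x^2 u\|_{L_2(D)}$ depending only on $\kappa$ (and on the fixed parameters $\beta$, $c_0$).

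Finally, I would reinsert this bound into the interpolation inequality for $\|\partial_x u\|_{L_2(D)}$ to conclude, together with the hypothesis $\|u\|_{L_2(D)}\le \kappa$, that $\|u\|_{H^2(D)}\le c(\kappa)$; plugging the resulting control on $\|\partial_x u\|_{L_2(D)}$ back into the first displayed inequality then bounds the electrostatic integral by $c(\kappa)$ as well, proving \eqref{H2}. There is no genuine obstacle here: the only subtlety is ordering the steps so that the a priori unknown quantity $\|\partial_x u\|_{L_2(D)}$ is eliminated via the $L_2$--$H^2$ interpolation before the Young absorption, after which everything collapses to an elementary quadratic inequality in $\|\partial_x^2 u\|_{L_2(D)}$.
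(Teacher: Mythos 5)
Your proposal is correct and follows essentially the same route as the paper: the variational characterization of $\psi_{u,\delta}$ combined with Lemma~\ref{LL0}(i) to bound the electrostatic energy by $c_0(1+\|u\|_{L_2(D)}^2+\|\partial_x u\|_{L_2(D)}^2)$, then the interpolation $\|\partial_x u\|_{L_2(D)}^2\le\|u\|_{L_2(D)}\|\partial_x^2 u\|_{L_2(D)}$ from the clamped boundary conditions, and finally absorption via Young's inequality using the coercivity of the bending term in $E_m$. The only difference is cosmetic (you rearrange $E_\delta(u)=E_m(u)+E_{e,\delta}(u)$ as an upper bound on $E_m(u)$ instead of a lower bound on $E_\delta(u)$), so there is nothing to add.
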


\begin{proof}
We argue similarly to \cite[Lemma 2.3]{JEPE20}. The variational characterization of $\psi_{u,\delta}$ (see \cite[Lemma~3.2]{LW19}) and \eqref{H1} imply
\begin{equation}\label{pl}
\begin{split}
\int_{\Omega_\delta(u)} \sigma_\delta \vert \nabla \psi_{u,\delta}\vert^2\, \rd (x,z) & \le \int_{\Omega_\delta(u)} \sigma_\delta \vert \nabla h_{u,\delta}\vert^2\, \rd (x,z) \\ &\le  c_0\big(1+\|u\|_{L_2(D)}^2+ \| \partial_x u\|_{L_2(D)}^2\big)\,,
\end{split}
\end{equation}
where $c_0$ is defined in Lemma~\ref{LL0}. 
Furthermore, since $u\in \bar{S}_0\subset H_D^2(D)$ we have
\begin{equation}\label{e21}
\|\partial_x u\|_{L_2(D)}^2=-\int_D u\partial_x^2u\,\rd x\le \|u\|_{L_2(D)} \| \partial_x^2 u\|_{L_2(D)}\,,
\end{equation}
so that we deduce from \eqref{A1} and \eqref{pl} that
\begin{equation}\label{h3}
\begin{split}
-E_{e,\delta}(u)=\frac{1}{2}\int_{\Omega_\delta(u)} \sigma_\delta \vert \nabla \psi_{u,\delta}\vert^2\, \rd (x,z)
&\le c(\kappa) \big(1+ \| \partial_x^2 u\|_{L_2(D)}\big)\,.
\end{split}
\end{equation}
Consequently, we obtain from \eqref{h3}, the definition of $E_\delta$, and Young's inequality that
\begin{equation*}
\begin{split}
E_\delta (u)&\ge \frac{\beta}{2} \|\partial_x^2u\|_{L_2(D)}^2 -c(\kappa) \big(1+ \| \partial_x^2 u\|_{L_2(D)}\big)\ge  \frac{\beta}{4} \|\partial_x^2u\|_{L_2(D)}^2 -c(\kappa)\,.
\end{split}
\end{equation*}
Combining the above estimate with \eqref{A1} and \eqref{e21} entails that $\| u\|_{H^2(D)}\le c(\kappa)$, which also implies the second assertion of \eqref{H2} due to \eqref{h3}.
\end{proof}

The total energies (defined in \eqref{te} and \eqref{rte}), being \textit{a priori} defined  only on $\bar{S}_0$, are extended to functionals on $L_2(D)$ by setting
$$
E_\delta(u):=\infty\,,\quad E(u):=\infty\,,\qquad u\in L_2(D)\setminus \bar{S}_0\,.
$$
Then we can prove:

\begin{corollary}\label{PA}
\begin{equation*}
\Gamma-\lim_{\delta\rightarrow 0} E_\delta =E\quad\text{in }\ L_2(D)\,.
\end{equation*}
\end{corollary}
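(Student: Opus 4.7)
The plan is to verify the two defining properties of $\Gamma$-convergence in $L_2(D)$, namely asymptotic lower semicontinuity and the existence of a recovery sequence, and both essentially reduce to Proposition~\ref{PP1} combined with the coercivity estimate of Lemma~\ref{LL1}.

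For the asymptotic lower semicontinuity, I would start with an arbitrary sequence $(u_\delta)_{\delta\in (0,1)}$ in $L_2(D)$ converging to some $u \in L_2(D)$ and show $E(u) \le \liminf_{\delta\to 0} E_\delta(u_\delta)$. The case $\liminf E_\delta(u_\delta) = \infty$ is trivial, so I pass to a subsequence (still denoted the same) along which $E_\delta(u_\delta)$ converges to a finite limit $\ell$. In particular, $u_\delta \in \bar{S}_0$ for $\delta$ small, and $\|u_\delta\|_{L_2(D)}$ is bounded since $u_\delta \to u$ in $L_2(D)$. Lemma~\ref{LL1} then yields a uniform bound on $\|u_\delta\|_{H^2(D)}$. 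By the compactness of the embedding $H^2(D) \hookrightarrow H^1(D)$ and the uniqueness of the $L_2$-limit, I may extract a further subsequence such that $u_\delta \rightharpoonup u$ in $H^2(D)$ and $u_\delta \to u$ in $H^1(D)$, hence also in $C(\bar D)$. The pointwise constraint $u \ge -H$ and the boundary conditions pass to the limit, so $u \in \bar{S}_0$.

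With this convergence in hand, Proposition~\ref{PP1} gives $E_{e,\delta}(u_\delta) \to E_{e,0}(u)$, while for the mechanical energy I would use that $\|\partial_x u_\delta\|_{L_2(D)} \to \|\partial_x u\|_{L_2(D)}$ (by strong $H^1$-convergence) and that $\|\partial_x^2 \cdot \|_{L_2(D)}^2$ is weakly lower semicontinuous along the weak $H^2$-convergence. This yields $E_m(u) \le \liminf E_m(u_\delta)$, and adding the two bounds gives
\begin{equation*}
E(u) = E_m(u) + E_{e,0}(u) \le \liminf_{\delta\to 0} E_m(u_\delta) + \lim_{\delta\to 0} E_{e,\delta}(u_\delta) \le \liminf_{\delta\to 0} E_\delta(u_\delta)\,.
\end{equation*}

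For the recovery sequence, given $u \in L_2(D)$ the simplest choice should work: if $u \notin \bar{S}_0$ then $E(u) = \infty$ and any sequence does the job, while if $u \in \bar{S}_0$ I take the constant sequence $u_\delta := u$. This trivially converges to $u$ in $H^1(D)$ (and satisfies $-H \le u_\delta$), so Proposition~\ref{PP1} gives $E_{e,\delta}(u) \to E_{e,0}(u)$, and since $E_m$ is $\delta$-independent, we obtain $E_\delta(u) \to E(u)$, which is the required $\limsup$ bound (in fact with equality).

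The only subtlety I anticipate is the lower semicontinuity of $E_m$: the nonlinear stretching term $(\|\partial_x u\|_{L_2(D)}^2)^2$ is not \emph{a priori} convex as a functional on $H^2(D)$, but the strong $H^1(D)$-convergence obtained via the Lemma~\ref{LL1} bound and the compact embedding $H^2 \hookrightarrow H^1$ makes the stretching terms \emph{continuous} in the limit, so only the bending term $\|\partial_x^2 u\|_{L_2(D)}^2$ needs the weak $H^2$-lower semicontinuity, which is immediate. The rest is routine bookkeeping.
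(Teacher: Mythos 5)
Your proposal is correct and follows essentially the same route as the paper: the constant sequence as recovery sequence (the paper cites the fixed-$u$ convergence from \cite{AMOP20}, while you invoke Proposition~\ref{PP1} with $u_\delta\equiv u$, which amounts to the same thing), and for the liminf inequality the combination of Lemma~\ref{LL1} with the compact embedding $H^2(D)\hookrightarrow H^1(D)$, Proposition~\ref{PP1} for the electrostatic part, and weak lower semicontinuity of the bending term. Your explicit remarks on $u\in\bar{S}_0$ passing to the limit and on the stretching term being continuous under strong $H^1$-convergence are details the paper leaves implicit, but they do not change the argument.
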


\begin{proof}
\textit{(i) Recovery sequence.} Concerning the construction of a recovery sequence it is sufficient to consider $u\in \bar{S}_0$. Let us observe from \cite[Corollary~3.4]{AMOP20}  that
$$
\lim_{\delta\to 0} E_{e,\delta}(u)=E_{e,0}(u)\,.
$$
Since $E_m(u)$ is independent of $\delta$, we thus readily obtain
$$
\lim_{\delta\to 0} E_{\delta}(u)=E(u)\,.
$$

\noindent{\textit{(ii) Asymptotic weak lower semicontinuity.}} Consider a sequence $(u_\delta)_{\delta\in (0,1)}$ in~$L_2(D)$ and $u\in L_2(D)$ such that
\begin{equation}\label{17}
\lim_{\delta\to 0} \|u_\delta-u\|_{L_2(D)}=0\,.
\end{equation}
Since we shall show that then
\begin{equation}\label{18}
E(u)\le\liminf_{\delta\to 0} E_\delta(u_\delta)\,,
\end{equation}
a property which is obviously true if the right-hand side is infinite, we may assume that there is a constant $\kappa>0$ such that
\begin{equation}\label{19}
E_\delta(u_\delta)\le \kappa\,,\quad \delta\in (0,1)\,.
\end{equation}
Now, due to \eqref{17} and \eqref{19}, we may invoke Lemma~\ref{LL1} to derive that $(u_\delta)_{\delta\in (0,1)}$ is bounded in $H^2(D)$. Thus, up to a subsequence, we have $u_\delta \rightharpoonup u$ in $H^2(D)$ and $u_\delta \to u$ in~$H^1(D)$. The former implies
\begin{equation}\label{24}
E_m(u)\le\liminf_{\delta\to 0} E_m(u_\delta)\,,
\end{equation}
while the latter, along with Proposition~\ref{PP1}, entails
\begin{equation}\label{25}
\lim_{\delta\to 0} E_{e,\delta}(u_\delta)=E_{e,0} (u)\,.
\end{equation}
Therefore, \eqref{18} holds true owing to \eqref{24} and \eqref{25}.
This implies the assertion.
\end{proof} 

\subsection{Remaining arguments for the proof of Theorem~\ref{TT1}:  The case $\bm{a>0}$}\label{S3.3}

Let $\delta\in (0,1)$. We first use the positivity of $a$ to show that the $H^2$-norm is controlled by $E_\delta$. Specifically, it follows from \eqref{pl}, the  Poincar\'e inequality
\begin{equation}
	\|v\|_{L_2(D)} \le 4L \|\partial_x v\|_{L_2(D)}\,, \qquad v\in H_0^1(D)\,, \label{pi}
\end{equation}
and Young's inequality $a r^4 + a \ge 2 a r^2$ that, for $u\in \bar{S}_0$, 
\begin{align*}
	E_\delta(u) & \ge \frac{\beta}{2} \|\partial_x^2 u\|_{L_2(D)}^2 + \frac{a}{4} \|\partial_x u\|_{L_2(D)}^4 - c_0 \left( 1 + \|u\|_{L_2(D)}^2 + \|\partial_x u\|_{L_2(D)}^2 \right) \\
	& \ge \frac{\beta}{2} \|\partial_x^2 u\|_{L_2(D)}^2 + \frac{a}{4} \|\partial_x u\|_{L_2(D)}^4 - c_0 \left[ 1 + (1+16L^2) \|\partial_x u\|_{L_2(D)}^2 \right] \\
	& \ge \frac{\beta}{2} \|\partial_x^2 u\|_{L_2(D)}^2 + \frac{a}{8} \|\partial_x u\|_{L_2(D)}^4 - c_0 - \frac{c_0^2}{a} (1+16L^2)^2 \\
	& \ge \frac{\beta}{2} \|\partial_x^2 u\|_{L_2(D)}^2 + \frac{a}{4} \|\partial_x u\|_{L_2(D)}^2 - c_1\,,
\end{align*}
with $c_0$ defined in Lemma~\ref{LL0} and $c_1 := a/8 + c_0 + c_0^2 (1+16L^2)^2/a$. Hence,
\begin{equation}
	\frac{\beta}{2} \|\partial_x^2 u\|_{L_2(D)}^2 + \frac{a}{4} \|\partial_x u\|_{L_2(D)}^2 \le E_\delta(u) + c_1\,, \qquad u\in \bar{S}_0\,. \label{vs}
\end{equation}

Now, for each $\delta\in (0,1)$, let $u_\delta^*\in \bar{S}_0$ be an arbitrary minimizer of $E_\delta$ in $\bar{S}_0$, see \eqref{B}, with corresponding electrostatic potential $\psi_{u_\delta^*,\delta}$ satisfying \eqref{TMP}. Since $E_\delta(u_\delta^*) \le E_\delta(0) \le 0$, we readily infer from \eqref{pi} and \eqref{vs} that $(u_\delta^*)_{\delta\in (0,1)}$ is bounded in $H^2(D)$. In particular, there are a subsequence $\delta_j\rightarrow 0$ and  $u^*\in \bar{S}_0$  such that
\begin{equation}
u_{\delta_j}^*\rightharpoonup u^*\quad\text{ in }\ H^2(D)\,, \label{wch2}
\end{equation}
so that Corollary~\ref{PA} and the Fundamental Theorem of $\Gamma$-convergence, see \cite[Corollary~7.20]{DaM93}, imply that $u^*$ is a minimizer of $E$ on $\bar{S}_0$ and
\begin{equation}
\lim_{j\to\infty} E_{\delta_j}(u_{\delta_j}^*) = E(u^*)\,. \label{cve}
\end{equation}
Moreover, since $(u_\delta^*)_{\delta\in (0,1)}$ is bounded in $H^2(D)$ and $(E_\delta(u_\delta^*))_{\delta\in (0,1)}$ is bounded, Lemma~\ref{LL1} and \eqref{sigma} entail that (the trivial extensions of)  $\big(\psi_{u_\delta^*,\delta}-h_{u_\delta^*,\delta}\big)_{\delta\in (0,1)}$ is bounded in $H^1(D\times (-H,M))$, where 
\begin{equation*}
	M := \max\left\{ \|H+u^*\|_{L_\infty(D)} , \sup_{\delta\in (0,1)} \|H+u_{\delta}^*\|_{L_\infty(D)}   \right \}
\end{equation*} 
is finite thanks to the boundedness of $(u_\delta^*)_{\delta\in (0,1)}$ in $H^2(D)$ and the continuous embedding of $H^2(D)$ in $L_\infty(D)$. Therefore, upon extracting a further subsequence if necessary, we may assume that $\big( \psi_{u_{\delta_j}^*,\delta_j} - h_{u_{\delta_j}^*,\delta_j} \big)_{j\ge 1}$ weakly converges in $H^1(D\times (-H,M))$, the limit necessarily being $\psi_{u^*}-h_{u^*}$ owing to Proposition~\ref{PP1}. 

 Let us finally improve the convergence \eqref{wch2} of $(u_{\delta_j}^*)_{j\ge 1}$. Since $H^2(D)$ embeds compactly in $H^1(D)$, it follows from \eqref{wch2} that 
\begin{equation}
\label{w2}
u_{\delta_j}^*\rightarrow u^*\quad\text{ in }\ H^1(D)
\end{equation}
and Proposition~\ref{PP1} then entails that $E_{e,\delta_j}(u_{\delta_j}^*)\rightarrow E_{e,0}(u^*)$ as $j \rightarrow \infty$. Recalling \eqref{cve}, we deduce that $E_{m}(u_{\delta_j}^*) \rightarrow E_{m}(u^*)$ as $j \rightarrow \infty$. Together with the convergences \eqref{wch2} and \eqref{w2}, this property implies the strong convergence of $(u_{\delta_j}^*)_{j\geq 1}$ to $u^*$ in $H^2(D)$ and completes the proof of Theorem~\ref{TT1} when $a>0$.

\subsection{Remaining arguments for the proof of Theorem~\ref{TT1}: The case $\bm{a=0}$}\label{S3.4}

To finish off the proof of Theorem~\ref{TT1}, we are left with the case $a=0$ for which the weak compactness of minimizers in $H^2(D)$ is harder to derive. Additional information on these minimizers is actually required and follows from the analysis performed in \cite{LW19, JEPE20}, using that they are critical points of the total energy.
	
\begin{lemma}\label{LLzz}
There is a constant $c_2>0$ which does not depend on $\delta\in (0,1)$ such that, if  $u$ is a minimizer of $E_\delta$ on $\bar{S}_0$ for some $\delta\in (0,1)$, then
\begin{equation*}
\|u\|_{L_\infty(D)}\le c_2\,,\qquad \delta\in (0,1)\,.
\end{equation*}
\end{lemma}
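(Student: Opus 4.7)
The plan is to exploit the variational inequality satisfied by a minimizer on the convex set $\bar{S}_0$, since in the case $a=0$ (and in particular when $\tau=0$) the estimate \eqref{vs} is no longer coercive and the energy bound alone does not control any Sobolev norm of $u$.

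First, I would invoke the analysis in \cite[Section~3]{LW19} and \cite[Section~4]{JEPE20}, which applies for each fixed $\delta\in (0,1)$: any minimizer $u\in \bar{S}_0$ of $E_\delta$ satisfies the fourth-order obstacle variational inequality
\begin{equation*}
\beta\int_D \partial_x^2 u\,\partial_x^2(v-u)\,\rd x + \tau\int_D \partial_x u\,\partial_x(v-u)\,\rd x \ge \int_D g_\delta(u)(v-u)\,\rd x
\end{equation*}
for every $v\in \bar{S}_0$, where the \emph{electrostatic pressure} $g_\delta(u)$ is given explicitly in terms of the trace of $\nabla\psi_{u,\delta}$ on the plate $\{z=u(x)\}$. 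Well-definedness of this trace rests on the $H^2$-regularity of $\psi_{u,\delta}$ on each side of the interface $\Sigma(u)$ established in \cite{LW19, JEPE20}.

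Second, I would derive a uniform-in-$\delta$ estimate of the form $\|g_\delta(u)\|_{L_2(D)} \le C\bigl(1+\|u\|_{H^2(D)}^\alpha\bigr)$ with some $\alpha<2$. The input is the variational bound
\begin{equation*}
\int_{\Omega_\delta(u)}\sigma_\delta|\nabla\psi_{u,\delta}|^2\,\rd (x,z) \le \int_{\Omega_\delta(u)}\sigma_\delta|\nabla h_{u,\delta}|^2\,\rd (x,z)
\end{equation*}
combined with \eqref{H1} in Lemma~\ref{LL0}, which yields a uniform $H^1$-control on $\psi_{u,\delta}$ over $\Omega(u)$, followed by standard $H^2$-regularity in the free space (where $\sigma_\delta\equiv 1$) and a trace theorem. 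The key geometric point is that the plate is separated from the degenerating layer $\mathcal R_\delta$ by the free space, so the estimate survives the limit $\delta\to 0$.

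Third, I would plug $v=0\in \bar{S}_0$ (admissible since $-H\le 0$) into the variational inequality, getting
\begin{equation*}
\beta\|\partial_x^2 u\|_{L_2(D)}^2 + \tau\|\partial_x u\|_{L_2(D)}^2 \le \|g_\delta(u)\|_{L_2(D)}\|u\|_{L_2(D)}\,,
\end{equation*}
and then apply the Poincar\'e inequality $\|u\|_{L_2(D)} \le C\|\partial_x^2 u\|_{L_2(D)}$ on $H_D^2(D)$ together with the estimate from the previous step and Young's inequality to obtain $\|u\|_{H^2(D)}\le C$ uniformly in $\delta$. The continuous embedding $H^2(D)\hookrightarrow L_\infty(D)$ then yields the announced $L_\infty$-bound.

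The main obstacle is the second step: showing that the implicit constant in the $L_2$-bound on $g_\delta(u)$ does not degenerate as $\delta\to 0$ and grows at most sublinearly in $\|u\|_{H^2(D)}$, so that it can be absorbed in the left-hand side of the estimate in step three. This requires a careful revisit of the elliptic estimates for the transmission problem \eqref{TMP} in \cite{LW19, JEPE20}, tracking the dependence on $\delta$ and on the geometry of $\Omega(u)$, the latter being controlled \emph{a posteriori} by the very $H^2$-bound one is trying to establish.
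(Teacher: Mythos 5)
Your starting point---the variational inequality satisfied by a minimizer, with the electrostatic force $g_\delta(u)$ on the right-hand side---coincides with the paper's, but your second step contains a genuine gap. You need $\|g_\delta(u)\|_{L_2(D)}\le C\bigl(1+\|u\|_{H^2(D)}^\alpha\bigr)$ with $\alpha<2$ and $C$ independent of $\delta$ \emph{and} of $u$. Since $g_\delta(u)$ is quadratic in traces of $\nabla\psi_{u,\delta}$ along the plate, this amounts to an $L_4$-bound on these traces, hence to $H^2$-type estimates for the transmission problem \eqref{TMP} whose constants are uniform in $\delta$ and grow sublinearly in $\|u\|_{H^2(D)}$. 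No such estimate is available: the $H^2$-regularity from \cite{LW19, JEPE20} comes with constants depending on the geometry of $\Omega_\delta(u)$, and the uniform $H^1$-bound \eqref{pl}--\eqref{H1} you invoke controls the energy, not an $L_2$-norm of a quadratic trace quantity. Moreover, your geometric remark that ``the plate is separated from the degenerating layer by the free space'' fails exactly when the coincidence set $\mathcal{C}(u)$ is non-empty: there the force is given by \eqref{efc} through the trace of $\sigma_\delta\partial_z\psi_{u,\delta,1}$ taken from inside the thin layer $\mathcal{R}_\delta$, the very region that degenerates as $\delta\to 0$, and minimizers may touch down. You flag this as the ``main obstacle'', but it is not a technicality to be revisited; it is the estimate on which your whole argument rests, and the proposal does not supply it.

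The paper circumvents this entirely by using only one-sided, sign information. By \eqref{ef}, $g_\delta(u)=\mathfrak{g}_\delta(u)-\tfrac12\bigl[\bigl((\partial_x h)_u\bigr)^2+\bigl((\partial_z h)_u+(\partial_w h)_u\bigr)^2\bigr](\cdot,u)$, where $\mathfrak{g}_\delta(u)\ge 0$ pointwise because it is a square, and the $h$-part is bounded by assumption \eqref{Kbound}; hence $g_\delta(u)\ge -K^2$ in $D$, uniformly in $\delta$ and $u$, with no elliptic estimate whatsoever. A comparison with an explicit supersolution in the variational inequality, as in \cite[Proposition~2.6]{JEPE20}, then gives $u\le c(L,\beta,\tau,K)$, while $u\ge -H$ holds by definition of $\bar{S}_0$; together this is the claimed $L_\infty$-bound. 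Note also that the lemma only asserts an $L_\infty$-bound, not the $H^2$-bound your route aims for: the paper recovers the $H^2$-bound afterwards from Lemma~\ref{LL1}, using the $L_2$-bound just obtained together with $E_\delta(u_\delta^*)\le E_\delta(0)\le 0$. If you wish to salvage your strategy, you would at least need to replace the two-sided $L_2$-control of $g_\delta(u)$ by this one-sided lower bound, which is precisely the paper's idea.
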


Taking Lemma~\ref{LLzz} for granted, we are in a position to complete the proof of Theorem~\ref{TT1} when $a=0$. 

\begin{proof}[Proof of Theorem~\ref{TT1}: $a=0$]
For each $\delta\in (0,1)$, let $u_\delta^*\in \bar{S}_0$ be an arbitrary minimizer of $E_\delta$ in $\bar{S}_0$, see \eqref{B}, with corresponding electrostatic potential $\psi_{u_\delta^*,\delta}$ satisfying \eqref{TMP}. 	
By Lemma~\ref{LLzz}, $(u_\delta^*)_{\delta\in (0,1)}$ is bounded in $L_\infty(D)$ and thus also in $L_2(D)$. Therefore, since $E_\delta(u_\delta^*)\le E_\delta(0)\le 0$, it is also bounded in $H^2(D)$ according to Lemma~\ref{LL1}. We may then proceed as in the previous case $a>0$ in order to complete the proof of Theorem~\ref{TT1}.
\end{proof}

We are left with proving Lemma~\ref{LLzz}, which relies on the same comparison argument as \cite[Proposition~2.6]{JEPE20} and uses in an essential way the Euler-Lagrange equation satisfied by minimizers of the total energy $E_\delta$.

\begin{proof}[Proof of Lemma~\ref{LLzz}]
Let $\delta\in (0,1)$ and consider a minimizer $u\in \bar{S}_0$ of $E_\delta$ on $\bar{S}_0$ (if any). Owing to \eqref{bobbybrown}, it follows from \cite[Theorem~1.3]{JEPE20} (see also \cite[Theorem~5.3]{LW19}) that $u$ is a weak solution to the parabolic variational inequality
\begin{equation*}	
\beta\partial_x^4 u - \tau\partial_x^2 u + \partial\mathbb{I}_{\bar{S}_0}(u) \ni -g_\delta(u) \;\;\text{ in }\;\; D\,,
\end{equation*}
where $\partial\mathbb{I}_{\bar{S}_0}$ denotes the subdifferential in $L_2(D)$ of the  indicator function $\mathbb{I}_{\bar{S}_0}$ of the closed convex set $\bar{S}_0$ (that is, $\mathbb{I}_{\bar{S}_0}(v)=0$ for $v\in \bar{S}_0$ and $\mathbb{I}_{\bar{S}_0}(v)=\infty$ for $v\in L_2(D)\setminus \bar{S}_0$).  Taking into account assumptions \eqref{sigma} and \eqref{Kbound0}, the electrostatic force $g_\delta(u)\in L_2(D)$ is given by
\begin{subequations} \label{ef}
\begin{equation}
g_\delta(u)(x) := \mathfrak{g}_\delta(u)(x) -\frac{1}{2}  \left[ \big((\partial_x h)_u\big)^2+ \big((\partial_z h)_u+(\partial_w h)_u\big)^2 \right](x, u(x)) \label{efa}
\end{equation}
for $x\in D$, where 
\begin{equation}
\mathfrak{g}_\delta(u)(x) := \frac{1}{2} \big(1+(\partial_x u(x))^2\big)\,\big[\partial_z\psi_{u,\delta}^2-(\partial_z h)_u-(\partial_w h)_u\big]^2(x, u(x)) \label{efb}
\end{equation}
for $x\in D\setminus \mathcal{C}(u)$ and
\begin{equation}
\mathfrak{g}_\delta(u)(x) := \frac{1}{2} \left[ \sigma_\delta\partial_z\psi_{u,\delta,1} - (\partial_z h)_u-(\partial_w h)_u \right]^2(x, -H) \label{efc}
\end{equation}
\end{subequations}
for $x\in  \mathcal{C}(u)$, the coincidence set $\mathcal{C}(u)$ being defined in \eqref{cs}. In the definition of $\mathfrak{g}_\delta(u)$, $\psi_{u,\delta,1} := \psi_{u,\delta} \mathbf{1}_{\mathcal{R}_\delta}$ and $\psi_{u,\delta,2} := \psi_{u,\delta} \mathbf{1}_{\Omega(u)}$, where we recall that \cite[Theorem~1.1]{LW19} guarantees that $\psi_{u,\delta,1} \in H^2(\mathcal{R}_\delta)$ and $\psi_{u,\delta,2}\in H^2(\Omega(u))$, so that the traces involved in \eqref{ef} are well-defined.

Now, since $\mathfrak{g}_\delta(u_\delta^*)\ge 0$ in $D$,  it easily follows from \eqref{Kbound} that $g_\delta(u)\ge  -K^2$ in $D$ and we argue as in the proof of \cite[Proposition~2.6]{JEPE20} to conclude that there is a constant $c>0$ depending only on $L$, $\beta$, $\tau$, and $K$ such that $u\le c$ in $D$. Recalling that $u\ge -H$ completes the proof.
\end{proof}

\section*{Acknowledgments}
We thank the referee for helpful comments.

\bibliographystyle{siam}
\bibliography{DL}

\end{document}